\numberwithin{equation}{section}
\newtheorem{theorem}{Theorem}[section]
\newtheorem*{theorem*}{Theorem}
\newtheorem*{definition*}{Definition}
\newtheorem{theo}[theorem]{Theorem}
\newtheorem*{theo*}{Theorem}
\newtheorem{lem}[theorem]{Lemma}
\newtheorem{prop}[theorem]{Proposition}
\newtheorem{cor}[theorem]{Corollary}
\theoremstyle{definition}
\newtheorem{example}[theorem]{Example}
\newtheorem{rem}[theorem]{Remark}
\DeclareMathOperator{\Amp}{Amp} 
\DeclareMathOperator{\Aut}{Aut}
\DeclareMathOperator{\Bl}{Bl}
\DeclareMathOperator{\Bir}{Bir}
\DeclareMathOperator{\BN}{BN}
\DeclareMathOperator{\codim}{codim}
\DeclareMathOperator{\coker}{coker}
\DeclareMathOperator{\ch}{ch}
\DeclareMathOperator{\Coh}{Coh}
\DeclareMathOperator{\Ext}{Ext}
\DeclareMathOperator{\ext}{ext}
\DeclareMathOperator{\Hom}{Hom}
\DeclareMathOperator{\Hilb}{Hilb}
\DeclareMathOperator{\id}{id}
\DeclareMathOperator{\im}{im}
\DeclareMathOperator{\Mov}{Mov}
\DeclareMathOperator{\Nef}{Nef}
\DeclareMathOperator{\NS}{NS}
\DeclareMathOperator{\Pic}{Pic}
\DeclareMathOperator{\Pos}{Pos}
\DeclareMathOperator{\rk}{rk}
\DeclareMathOperator{\Spec}{Spec}
\DeclareMathOperator{\Supp}{Supp}
\DeclareMathOperator{\Sym}{Sym}
\DeclareMathOperator{\sheafHom}{\mathcal{H}\it{om}}
\DeclareMathOperator{\sheafExt}{\mathcal{E}\it{xt}}
\DeclareMathOperator{\sheafTor}{\mathcal{T}\it{or}}
\renewcommand{\epsilon}{\varepsilon}
\newcommand{\Acal}{\mathcal{A}}
\newcommand{\Bcal}{\mathcal{B}}
\newcommand{\Ccal}{\mathcal{C}}
\newcommand{\Ecal}{\mathcal{E}}
\newcommand{\Fcal}{\mathcal{F}}
\newcommand{\Hcal}{\mathcal{H}}
\newcommand{\Ical}{\mathcal{I}}
\newcommand{\Lcal}{\mathcal{L}}
\newcommand{\Ocal}{\mathcal{O}}
\newcommand{\Qcal}{\mathcal{Q}}
\newcommand{\Scal}{\mathcal{S}}
\newcommand{\Tcal}{\mathcal{T}}
\newcommand{\Xcal}{\mathcal{X}}
\newcommand{\Wcal}{\mathcal{W}}
\newcommand{\Q}{\mathbb{Q}}
\newcommand{\R}{\mathbb{R}}
\newcommand{\Z}{\mathbb{Z}}
\newcommand{\C}{\mathbb{C}}
\newcommand{\Pbb}{\mathbb{P}}
\newcommand{\mfrak}{\mathfrak{m}}
\newcounter{commentcounter}
\def\?{\ {\bf\color{red}???}\ 
\immediate\write16{}
\immediate\write16{Warning: There was still a question mark . . . }
\immediate\write16{}}
\begin{document}

\title[Birational geometry of the Mukai system of rank two and genus two]{Birational geometry of the Mukai system of rank two and genus two}
	\author[I.~Hellmann]{Isabell Hellmann}
	\thanks{The author is supported by the SFB/TR 45 `Periods, Moduli Spaces and Arithmetic of Algebraic varieties' of the DFG (German Research Foundation) and the Bonn International Graduate School.}
	\address{Mathematisches Institut, Universit\"at Bonn, Endenicher Allee 60, 53115 Bonn, Germany}
	\email{igb@math.uni-bonn.de}
	
	\begin{abstract}
	Using the techniques of Bayer--Macr\`i, we determine the walls in the movable cone of the Mukai system of rank two for a general K3 surface $S$ of genus two. We study the (essentially unique) birational map to $S^{[5]}$ and decompose it into a sequence of flops. We give an interpretation of the exceptional loci in terms of Brill--Noether loci.
	\end{abstract}
	
	\maketitle
	
	\setcounter{tocdepth}{2}
	
\section{Introduction}
Let $(S,H)$ be a polarized K3 surface of genus two, that is a double covering $\pi \colon S \rightarrow \Pbb^2$ ramified over a smooth sextic curve and $H =\Ocal_S(1) = \pi^*\Ocal_{\Pbb^2}(1)$. Moreover, assume that $\Pic(S)= \Z \cdot H$. We consider the moduli space $M=M_H(0,n,-1)$ of $H$-Gieseker stable coherent sheaves on $S$ with Mukai vector $v=(0,n,-1)$. This is an irreducible holomorphic symplectic variety of dimension $2(n^2+1)$. A point in $M_H(0,n,-1)$ corresponds to a stable sheaf $\Ecal$ on $S$ such that $\Ecal$ is pure of dimension one with support in the linear system $|nH|$ and $\chi(\Ecal)=-1$. Taking the (Fitting) support defines a Lagrangian fibration
\[ f \colon M_H(0,n,-1) \longrightarrow |nH| \cong \Pbb^{n^2+1},\ \Ecal \mapsto \Supp(\Ecal) \]
known as the \emph{Mukai system of genus two} \cite{Beau}, \cite{Mu}. Over a point of $|nH|$ which corresponds to a smooth curve $D \subset S$, the fibers of $f$ are abelian varieties isomorphic to $\Pic^{n^2-1}(D)$. So, $M_H(0,n,-1)$ can be viewed as a compactified relative Jacobian of the universal curve $\Ccal \rightarrow |nH|$.\\

Due to its rich and beautiful geometry the Mukai system serves as an interesting example in Hyperk\"ahler geometry. An instance of this, shall be demonstrated in this article, where we determine all birational models in the case $n=2$.\\

It is easy to see that $M\coloneqq M_H(0,n,-1)$ is birational to $S^{[n^2+1]}$. Namely, let $\xi \in S^{[n^2+1]}$ such that $\Supp(\xi)$ consists of $n^2+1$ points in general position. Then there is a unique smooth curve $D \in |nH|$ such that $\xi \subset D$ and this allows to define a rational map
\[ T \colon S^{[n^2+1]} \dashrightarrow M_H(0,n,-1),\ \xi \mapsto \Ocal_D(-\xi) \otimes \Ocal_S(n)|_D.\]
Conversely, a general point in $M$ is given by $\Lcal \in \Pic^{n^2-1}(D)$, for a smooth curve $D \in |nH|$ and thus generically $\dim H^0(S,\Lcal^\vee \otimes \Ocal_S(n)|_D)= 1$. Hence, $T$ is birational.
The morphism $T$ can be defined more conceptually via the spherical twist $T_{\Ocal_S(-n)} \colon D^b(S) \rightarrow D^b(S)$ \cite[\S 8.1]{FM}.
Let $\Ical_{\xi} \in M_H(1,0,-n^2)$ be the ideal sheaf of a point $\xi \in S^{[n^2+1]}$, which is contained in the open subset, where $h^0(\Ical_\xi(n))=1$. By definition, $T_{\Ocal_S(-n)}(\Ical_{\xi})$ fits into a short exact sequence
\[ 0 \rightarrow \Ocal_S(-n) \rightarrow \Ical_{\xi} \rightarrow T_{\Ocal_S(-n)}(\Ical_{\xi}) \rightarrow 0. \]
We conclude $T(\Ical_\xi) = T_{\Ocal_S(-n)}(\Ical_{\xi})(n)$. In other words, $T$ is the composition
\[ S^{[n^2+1]}= M_H(1,0,-n^2) \xrightarrow{T_{\Ocal_S(-n)}} M_\sigma(0,n,-2n^2-1) \xrightarrow{-\otimes \Ocal_S(n)} M_\sigma(0,n,-1) \dashrightarrow M_H(0,n,-1),\]
where the first two arrows are isomorphisms and $\sigma$ is a suitable stability condition. The last arrow is the birational transformation coming from wall-crossing along a path from $\sigma$ into the Gieseker chamber.\\
If $n=1$, then all curves in $|H|$ are irreducible and therefore $T_{\Ocal_S(-1)}(\Ical_\xi)$ is a stable sheaf provided that $h^0(\Ical_\xi(1))=1$. The indeterminancy of $T$ is exactly the closed subset of $\xi \in S^{[2]}$ through which passes a pencil of curves in $|H|$, which is identified with
\[ \Pbb^2 \subset S^{[2]},\ x \mapsto \pi^{-1}(x).\]
A resolution of $T$ is the original example of a Mukai flop \cite{Mu}:
\[ \xymatrix@R-1pc@C-2pc{ & \Bl_{\Pbb^2}S^{[2]} \cong \Hilb^2(\Ccal/|H|) \ar[dl]\ar[dr]&&& (\xi \subset C)\ar@{|->}[dl]\ar@{|->}[dr]\\
S^{[2]} \ar@{-->}[rr]&& M_H(0,1,-1),& \xi && \Ocal_C(-\xi)\otimes \Ocal_S(1)|_C.
}\]
For $n=2$, we show
\begin{prop}[Prop \ref{birational map M S5}]\label{indeterminancy}
The spherical twist at $\Ocal_S(-2)$ induces an isomorphism
\[ T \colon S^{[5]} \setminus \{ \xi \in S^{[5]} \mid h^0(\Ical_{\xi}(2)) \geq 2\} \xrightarrow \sim M\setminus ( \{ \Ecal \in M \mid h^0(\Ecal) \geq 1\} \cup M^0_\Delta).\]
Here, $\Delta \subset |2H|$ is the locus of non-reduced curves and $M^0_\Delta \subset M$ is the irreducible component of $f^{-1}(\Delta)$ that consists of vector bundles of rank two and degree one on the underlying reduced curve.
\end{prop}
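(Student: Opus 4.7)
The plan is to define the map $T$ through the spherical-twist triangle, check where it lands in the stable moduli, and then build an explicit inverse using the dual extension.

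\textbf{Defining $T$ on the good locus.} Riemann--Roch on $S$ gives $\chi(\Ical_\xi(2)) = h^0(\Ocal_S(2)) - 5 = 1$, and Serre duality yields $h^2(\Ical_\xi(2)) = 0$ (since $\Ical_\xi(-2)$ has no sections). Hence on $U := \{\xi : h^0(\Ical_\xi(2)) = 1\}$ one has $h^i(\Ical_\xi(2)) = 0$ for $i \geq 1$, and the triangle
\[
R\Hom(\Ocal_S(-2), \Ical_\xi) \otimes \Ocal_S(-2) \longrightarrow \Ical_\xi \longrightarrow T_{\Ocal_S(-2)}(\Ical_\xi)
\]
collapses to a short exact sequence. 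The unique map $\Ocal_S(-2) \to \Ical_\xi$ factors through $\Ocal_S(-2) \twoheadrightarrow \Ical_D$, where $D \in |2H|$ is the unique curve through $\xi$, so
\[
T_{\Ocal_S(-2)}(\Ical_\xi) = \Ical_{\xi/D}, \qquad T(\Ical_\xi) = \Ical_{\xi/D} \otimes \Ocal_S(2H)|_D.
\]

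\textbf{Forward direction.} On $U$ one must verify that $T(\Ical_\xi)$ is $H$-Gieseker stable, has $h^0 = 0$, and avoids $M^0_\Delta$. The Mukai vector is automatic. Twisting the above short exact sequence by $\Ocal_S(2)$ and taking cohomology gives $0 \to H^0(\Ocal_S) \to H^0(\Ical_\xi(2)) \to H^0(T(\Ical_\xi)) \to 0$, hence $h^0(T(\Ical_\xi)) = 0$. To rule out $M^0_\Delta$, note that $T(\Ical_\xi)$ has scheme-theoretic support equal to $D$; when $D = 2C \in \Delta$, this is strictly bigger than $C_{\mathrm{red}} = C$, so the restriction of $T(\Ical_\xi)$ to $C$ is generically of rank $1$, whereas elements of $M^0_\Delta$ are by definition rank-$2$ bundles on $C$. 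For stability, the case of integral $D$ is immediate (rank-$1$ torsion-free sheaves on integral curves are stable); the reduced reducible case $D = C_1 + C_2$ and the non-reduced case $D = 2C$ reduce to a short comparison of reduced Hilbert polynomials of potential destabilizing subsheaves.

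\textbf{Inverse construction.} Let $\Ecal \in M$ with $h^0(\Ecal) = 0$ and $\Ecal \notin M^0_\Delta$. Since $\chi(\Ecal) = -1$, we have $h^1(\Ecal) = 1$, and Serre duality gives $\Ext^1_S(\Ecal, \Ocal_S) \cong H^1(\Ecal)^\vee \cong \C$. The unique non-split extension
\[
0 \longrightarrow \Ocal_S(-2) \longrightarrow \Gcal \longrightarrow \Ecal(-2) \longrightarrow 0
\]
produces a rank-$1$ sheaf $\Gcal$ with $c_1(\Gcal) = 0$ and Mukai vector $(1,0,-4)$. A torsion subsheaf of $\Gcal$ must inject into $\Ecal(-2)$, and the extension must restrict trivially to it; one shows that such a splitting subsheaf exists exactly when $\Ecal$ admits a rank-$2$ structure on some $C \in |H|$, i.e. $\Ecal \in M^0_\Delta$. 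Hence under our assumption $\Gcal$ is torsion-free, its double dual is a line bundle with trivial first Chern class, which by $\Pic(S) = \Z H$ forces $\Gcal^{\vee\vee} = \Ocal_S$, so $\Gcal = \Ical_\xi$ for some $\xi \in S^{[5]}$. Running the Riemann--Roch computation backwards shows $h^0(\Ical_\xi(2)) = 1$, i.e.\ $\xi \in U$. Because $T_{\Ocal_S(-2)}$ is an auto-equivalence of $D^b(S)$, this construction is a two-sided inverse to $T$.

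\textbf{Main obstacle.} The delicate step is the precise matching of the two exceptional loci --- in particular, showing that the existence of a torsion-producing subsheaf of $\Gcal$ is equivalent to $\Ecal \in M^0_\Delta$, and dually that the bad locus $\{h^0(\Ical_\xi(2)) \geq 2\} \subset S^{[5]}$ consists exactly of the $\xi$ whose $T$-image fails to be stable or fails to avoid $M^0_\Delta$. Concretely, this demands a description of the components of $f^{-1}(\Delta)$ according to the generic rank of the sheaf along $C_{\mathrm{red}}$, and tracking how the spherical twist interchanges rank and support (an ideal sheaf through a curve $C \in |H|$ with five collinear points becomes a rank-$2$ vector bundle on $C$, not a line bundle on $2C$).
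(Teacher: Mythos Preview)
Your outline is sound and the inverse-via-extension idea is a legitimate alternative to the paper's route, but two steps are genuinely incomplete and the proposal as written is not a proof.

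\textbf{The stability check is the heart of the argument, not a footnote.} When $D = D_1 \cup D_2$ is reducible, the potential destabilisers of $\Ecal = T(\xi)$ are the restriction quotients $\Ecal \twoheadrightarrow \Ecal|_{D_i}$ and the subsheaves $\Ecal(-1)|_{D_i} \hookrightarrow \Ecal$. One computes $\chi(\Ecal|_{D_i}) = \lg(\Ocal_\xi \otimes \Ocal_{D_{3-i}}) - 2$, so instability is equivalent to $\lg(\Ocal_\xi \otimes \Ocal_{D_i}) \geq 4$ for some $i$. The crucial missing step is then a local-algebra lemma: if a length-$n$ punctual subscheme $\xi$ supported at $p$ satisfies $\lg(\Ocal_\xi \otimes \Ocal_{C}) = n-1$ for an integral curve $C$, then $\xi \subset C \cup C'$ for \emph{every} curve $C'$ through $p$. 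This produces a pencil in $|2H|$ through $\xi$ and forces $h^0(\Ical_\xi(2)) \geq 2$. Without this lemma the implication ``$T(\xi)$ unstable $\Rightarrow \xi \notin U$'' is unproven. The non-reduced case $D = 2C$ requires the same lemma (with $D_1 = D_2 = C$), plus the extra observation that $\xi \subset C$ already gives $h^0(\Ical_\xi(2)) \geq 3$. Calling this ``a short comparison of reduced Hilbert polynomials'' hides exactly the content of the proposition.

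\textbf{The torsion-freeness claim is asserted, not argued.} Your statement that $\Gcal$ acquires torsion precisely when $\Ecal \in M^0_\Delta$ is the crux of your inverse construction, and the ``Main obstacle'' paragraph is an acknowledgement that you have not established it. The paper avoids this issue entirely: instead of building $\Gcal$ as an abstract extension, it shows directly, via a long exact $\Hom$-sequence on the components of $D$, that $\Hom(\Ecal, \Ocal_S(2)|_D) \neq 0$ whenever $h^0(\Ecal) = 0$ and $\Ecal \notin M^0_\Delta$, and takes $\xi$ to be the support of the cokernel of a chosen nonzero map. Combined with the general fact that a birational map between projective irreducible holomorphic symplectic manifolds is an isomorphism on its regular locus, this pins down the image without ever analysing torsion in an extension.

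\textbf{A minor confusion.} Your parenthetical ``an ideal sheaf through a curve $C \in |H|$ with five collinear points becomes a rank-$2$ vector bundle on $C$'' is not correct: for such $\xi$ one has $h^0(\Ical_\xi(2)) \geq 3$, so $\xi \notin U$ and $T_{\Ocal_S(-2)}(\Ical_\xi)$ is a genuine two-term complex, not a sheaf. The locus $M^0_\Delta$ is excluded from the image not because some $T(\xi)$ with $\xi \in U$ lands there, but simply because every $T(\xi)$ has generic rank one on the reduced support.
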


From the birational geometer's point of view, $M_H(0,n,-1)$ and $S^{[n^2+1]}$ are extremal in the following sense. If one considers the decomposition of the movable cone
\[ \Mov(S^{[n^2+1]}) \subset \NS(S^{[n^2+1]})_\R \cong \R^2\]
into chambers corresponding to birational models, $S^{[n^2+1]}$ is at the one end, for it admits a divisorial contraction given by the Hilbert--Chow morphism and $M_H(0,n,-1)$ with the Lagrangian fibration is at the other end. For $n=1$, the Mukai system $M_H(0,1,-1)$ is the only other smooth birational model of $S^{[2]}$. If $n>1$, the presence of reducible and non-reduced curves in the linear system $|nH|$ make the situation more complicated. The paper in hand deals with the case $n=2$. We prove the following result.

\begin{theo}[Thm \ref{birational models}]\label{main theo}
Let $(S,H)$ be a polarized K3 surface with $\Pic(S)=\Z \cdot H$ and $H^2=2$. There are five (smooth) birational models of $S^{[5]}$ or $M\coloneqq M_H(0,2,-1)$, respectively. They are connected by a chain of flopping contractions
\[ \xymatrix@R-1pc@C-2pc{
& \Bl_{W_2}S^{[5]} \ar[dr]\ar[dl] && \Bl_{\tilde{W}_3}X_1\ar[dr]\ar[dl] && \Bl_{\tilde{Z}_3}X_3\ar[dr]\ar[dl] && \Bl_{Z_1}M\ar[dr]\ar[dl] \\
S^{[5]} \ar@{-->}[rr]^{g_1} && X_1 \ar@{-->}[rr]^{g_2} && X_2 && X_3 \ar@{-->}[ll]_{g_3} &&  M \ar@{-->}[ll]_{g_4}} \]
for some subvarieties $W_2 \subset W_3 \subset S^{[5]}$ such that
\begin{itemize}
\item $W_2$ is a $\Pbb^3$-bundle over $M_H(0,1,-6)$,
\item $W_3 \setminus W_2 $ is a $\Pbb^2$-bundle over an open subset of $M_H(0,1,-5)\times S$.
\end{itemize}
and subvarieties $Z_1 \subset Z_3 \subset M$ such that
\begin{itemize}
\item $Z_1$ is a $\Pbb^4$-bundle over $S$,
\item $Z_3\setminus Z_1 $ is a $\Pbb^2$-bundle over an open subset of $S^{[3]}$.
\end{itemize}
Here, $\tilde{W}_3$ (resp.\ $\tilde{Z}_3$) is the strict transform of $W_3$ (resp.\ $Z_3$) under $g_1$ (resp.\ $g_4$).
\end{theo}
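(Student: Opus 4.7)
The plan is to apply the Bayer--Macr\`i theorem, which identifies the chamber decomposition of $\Mov(M)$ with the wall-and-chamber decomposition of a connected component of the space of Bridgeland stability conditions on $D^b(S)$. Under the hypothesis $\Pic(S) = \Z \cdot H$, the N\'eron--Severi group of $M$ has rank two, so $\Mov(M) \subset \R^2$ decomposes into finitely many chambers by the global Torelli theorem. The composition $(-\otimes \Ocal_S(2)) \circ T_{\Ocal_S(-2)}$ identifies $S^{[5]} = M_H(1,0,-4)$ birationally with $M = M_H(0,2,-1)$ (Proposition~\ref{indeterminancy}), so these two Gieseker models correspond to chambers at opposite ends of $\Mov$. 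The theorem amounts to showing that the path from one to the other crosses exactly four walls, each inducing a flop with exceptional locus of the claimed form.

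The first task is the enumeration of the walls. A potential wall is labelled by a rank-two primitive sublattice of the algebraic Mukai lattice containing $v = (0,2,-1)$, represented by a second generator $a$ with $a^2 \geq -2$ and with $(a,v)$ bounded by the Bayer--Macr\`i inequalities. Working in the rank-three lattice spanned over $\Z$ by $(1,0,0),(0,H,0),(0,0,1)$ yields only finitely many candidates, and after discarding those for which no destabilising HN sub-object actually exists, exactly four walls remain. For each of them I extract the Mukai vectors $v_1, v_2$ of the HN sub and quotient, and identify the corresponding moduli $M_\sigma(v_i)$: spherical classes ($v_i^2 = -2$) give a unique rigid object, isotropic classes ($v_i^2 = 0$) give K3 surfaces (identified with $S$ by Fourier--Mukai, since $\Pic(S) = \Z\cdot H$), and the classes with small positive square produce the further hyperk\"ahler bases $S^{[3]}$ and $M_H(0,1,-5)$, $M_H(0,1,-6)$ appearing in the theorem.

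Given this data, the exceptional locus of each wall is realised as a relative projectivisation. A generic strictly semistable object along a wall is a non-split extension $0 \to \Acal \to \Ecal \to \Bcal \to 0$ with $v(\Acal) = v_1$ and $v(\Bcal) = v_2$, so the exceptional locus fibres over $M_\sigma(v_1) \times M_\sigma(v_2)$ with fibre $\Pbb(\Ext^1(\Bcal,\Acal))$ of dimension $(v_1,v_2) - 1$. Substituting the numerical data for the four walls, and transporting the $S^{[5]}$-side bundles back through the spherical twist, produces in order the four projective bundle structures asserted in the theorem. The inclusions $W_2 \subset W_3$ and $Z_1 \subset Z_3$ arise because adjacent walls share a numerical class that specialises across the boundary (an isotropic class splits off a spherical summand), which forces the $\Pbb^2$-bundles on $W_3 \setminus W_2$ and $Z_3 \setminus Z_1$ to be defined only over an open subset of the product moduli, degenerating along the diagonal locus where the inner stratum lives.

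The remaining step, and the principal technical obstacle, is to show that each wall produces an honest flop whose common resolution is a single blow-up along the smooth centre described in the theorem, and that the strict transforms $\tilde W_3, \tilde Z_3$ are again smooth flop centres for the next contractions. This is handled by a local normal bundle analysis: by Markman's classification of flops of irreducible holomorphic symplectic manifolds, a $\Pbb^k$-bundle over a symplectic base of complementary dimension is a standard stratified Mukai flop centre, and a single blow-up along it resolves the flop. One then verifies that the outer stratum meets the inner transversely along its specialisation locus, so that its strict transform under the first blow-up remains a smooth projective bundle over the expected base. Iterating through the four walls, and cross-checking via the Bayer--Macr\`i map that the geometrically realised contractions match the numerical prediction, completes the proof and yields the Brill--Noether interpretation of the four exceptional loci.
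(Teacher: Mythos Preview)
Your overall strategy matches the paper's: apply Bayer--Macr\`i \cite{MMP} to enumerate walls in $\Mov(M)$, and for each wall identify the exceptional locus as a projective bundle of extensions $\Pbb(\Ext^1(\Bcal,\Acal))$ over the product of the smaller moduli spaces. That part is sound, and the paper carries it out by explicit computation of the lattices $\Hcal_i=\langle v,a_i\rangle$ and by checking, for each $i$, that the decomposition $v=a_i+(v-a_i)$ is the only one, that the wall is not totally semistable, and that the parallelogram $0,a_i,v-a_i,v$ contains no interior lattice points (hence no refinement, hence the exceptional locus is irreducible and actually a projective bundle). You gloss over these checks, but they are not automatic and are where the specific numbers $(\Pbb^3,\Pbb^2,\Pbb^2,\Pbb^4)$ and the specific bases come from.

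Your final paragraph, however, is where the proposal drifts. There is no ``Markman classification of flops'' that delivers the blow-up resolution in the form you invoke, and the paper does not perform a normal-bundle or transversality analysis to handle the strict transforms. Instead, the paper works along explicit paths $\sigma'_t=\sigma_{tH,-2H}$ (from the $S^{[5]}$ side) and $\sigma_t=\sigma_{tH,0}$ (from the $M$ side), so that at the second wall one simply observes: if $\Ical_\xi\in S^{[5]}\setminus W_2$ then it was not destabilised at the first wall, hence lies in $X_1$, and is in the exceptional locus of $g_2$ iff it admits the required HN triangle there. This sidesteps the strict-transform issue entirely. Likewise, your explanation of the inclusions $W_2\subset W_3$ and $Z_1\subset Z_3$ via ``an isotropic class splitting off a spherical summand'' is not how the paper argues: those inclusions and the open-subset restrictions on the $\Pbb^2$-bundles are established \emph{geometrically} in Propositions~\ref{W is jumping locus} and~\ref{Z1} (extension sequences $0\to\Ocal_S(-1)\to\Ical_\xi\to\Qcal\to0$ and $0\to\Ical_x(-1)\to\Ical_\xi\to\Qcal\to0$, resp.\ the analogues on the $M$ side), and the wall-crossing proof then \emph{matches} those pre-built loci with the extension bundles predicted by $\Hcal_i$. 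So the missing ingredient in your sketch is not a normal-bundle computation but rather the explicit identification of $M_\sigma(a_i)$ and $M_\sigma(b_i)$ with concrete moduli (e.g.\ $M_{\sigma'_r}(1,-1,1)\cong S$ via $x\mapsto\Ical_x(-1)$, or $M_{\sigma_t}(-1,2,-2)\cong S^{[3]}$ via $\Ical_\xi\mapsto R\sheafHom(\Ical_\xi,\Ocal_S)(-2)[1]$), together with the hands-on descriptions of $W_i,Z_i$ from Section~\ref{section BN}.
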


We prove Theorem \ref{main theo} using the methods of Bayer--Macr\`i \cite{MMP}. Their techniques give a procedure to compute the walls in the movable cone and to identify the curves, which are contracted at every step. The exceptional loci are components of the Brill--Noether loci
\[ \BN^i(S^{[5]}) \coloneqq \{ \xi \in S^{[5]} \mid h^0(\Ical_{\xi}(2)) \geq i+1\} \subset S^{[5]},\ i=1,2 \]
and
\[ \BN^i(M) \coloneqq \{ \Ecal \in M \mid h^0(\Ecal) \geq i+1 \} \subset M,\ i=0,1.\]
More precisely, we have the following description.
\begin{prop}\label{main prop}
\begin{enumerate}[\rm (i)]
\item We have \[
W_2 = \{ \xi \in S^{[5]} \mid \text{there is}\ C \in |H|\ \text{such that}\ \xi \subset C\}\subset \BN^2(S^{[5]}),\ \text{and}\]
\[ W_3 = \{ \xi \in S^{[5]} \mid \text{there is}\ x \in \xi\ \text{and}\ C \in |H|\ \text{such that}\ \xi\setminus\{x\} \subset C\} \subset \BN^1(S^{[5]}). \]
\item $Z_1$ (resp.\ $Z_3$) is the component of $\BN^1(M)$ (resp.\ $\BN^0(M)$) that dominates the locus of smooth curves in $|2H|$.
\end{enumerate}
\end{prop}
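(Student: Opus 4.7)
The plan is to apply the Bayer--Macr\`i wall-crossing framework of \cite{MMP} at each of the four walls identified in Theorem \ref{main theo}. At every wall there is a rank-two sub-lattice of the algebraic Mukai lattice containing $v$, and a generic point of the corresponding flopping locus admits a destabilizing short exact sequence (or triangle in a tilted heart) whose pieces are determined by this lattice. Writing the four destabilizing classes explicitly then realizes each exceptional locus as a projective bundle of extensions, which I then match with the Brill--Noether description in the statement.

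For (i), one works on the $S^{[5]}$ side where a generic stable object is $\Ical_\xi$ with $v(\Ical_\xi)=(1,0,-4)$. The outermost wall has destabilizing spherical sub-object $\Ocal_S(-H)$, $v(\Ocal_S(-H))=(1,-1,2)$, with quotient class $(0,1,-6)$. Consequently $\xi\in W_2$ iff $\Hom(\Ocal_S(-H),\Ical_\xi)=H^0(S,\Ical_\xi(H))\neq 0$, equivalently iff $\xi$ lies on some $C\in|H|$. The destabilizing sequence
\[ 0 \longrightarrow \Ocal_S(-H) \longrightarrow \Ical_\xi \longrightarrow \Ocal_C(-\xi) \longrightarrow 0 \]
exhibits $W_2$ as the $\Pbb(\Ext^1(\Ocal_C(-\xi),\Ocal_S(-H)))$-bundle over $M_H(0,1,-6)$, and Serre duality together with Riemann--Roch on the genus-two curve $C$ give $\Ext^1\cong H^0(C,\Ocal_C(\xi))^\vee$ of dimension four, yielding the $\Pbb^3$-bundle. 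The containment $W_2\subset\BN^2(S^{[5]})$ follows from the $\Pbb^2$-family $\{C+C':C'\in|H|\}\subset|2H|$. For $W_3$ the next wall has destabilizing sub-object of class $(1,-1,1)=v(\Ical_x(-H))$ for a point $x\in S$, producing
\[ 0 \longrightarrow \Ical_x(-H) \longrightarrow \Ical_\xi \longrightarrow \Ocal_C(-\xi') \longrightarrow 0 \]
with $\xi'=\xi\setminus\{x\}\subset C\in|H|$; the analogous extension computation exhibits $W_3\setminus W_2$ as a $\Pbb^2$-bundle over the open subset of $M_H(0,1,-5)\times S$ where $x\notin\xi'$. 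The inclusion $W_3\subset\BN^1(S^{[5]})$ follows from the pencil $\{C+C':x\in C'\in|H|\}\subset|2H|$.

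For (ii), a symmetric analysis is performed near the Lagrangian chamber of $M$. The outermost wall on the $M$-side has destabilizing sub-object $\Ocal_S$, so a generic $\Ecal\in M$ becomes unstable iff $H^0(\Ecal)\neq 0$, i.e.\ $\Ecal\in\BN^0(M)$. On the component $Z_3$ where $h^0(\Ecal)=1$, the section determines a unique effective divisor $\xi\in D^{(3)}\subset S^{[3]}$ with $D=\Supp\Ecal$, and sending $\Ecal\mapsto\xi$ realizes $Z_3\setminus Z_1$ as a $\Pbb^2$-bundle over an open subset of $S^{[3]}$ with fiber $\Pbb(H^0(\Ical_\xi(2H)))$. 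For $Z_1$ the destabilizing class is $v(\Ocal_S)$ with multiplicity two, so $h^0(\Ecal)\geq 2$. One now uses that every smooth $D\in|2H|$ is hyperelliptic: since $|2H|=\pi^*|\Ocal_{\Pbb^2}(2)|$, the double cover $\pi$ restricts to a $g^1_2$ on $D$, giving a distinguished $L_0\in W^1_2(D)$, and the classification of $W^1_3$ on hyperelliptic curves of genus five shows that every $\Ecal\in W^1_3(D)$ has the form $L_0\otimes\Ocal_D(x)$ for a unique $x\in D$. Sending $\Ecal\mapsto x$ then realizes $Z_1$ as the universal curve $\{(x,D):x\in D\in|2H|\}\subset S\times|2H|$, which via the first projection is the $\Pbb^4$-bundle over $S$ with fiber $\Pbb(H^0(\Ical_x(2H)))$.

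The main obstacle lies in the two inner walls (producing $g_2$ and $g_3$), where the destabilizing sub-object is a two-term complex in the tilted heart rather than an honest sheaf, and identifying the universal extension class globally requires care. A secondary subtlety is isolating the correct components of $\BN^i(M)$: both $\BN^0(M)$ and $\BN^1(M)$ may have spurious pieces living over reducible or non-reduced curves in $|2H|$, so one must verify via the explicit parametrization of the flopping loci of $g_3$ and $g_4$ that the universal-extension components are precisely $Z_3$ and $Z_1$.
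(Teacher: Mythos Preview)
Your treatment of part (i) is correct and matches the paper's approach: the paper likewise identifies $W_2$ and $W_3$ via the destabilizing sequences with sub-objects $\Ocal_S(-H)$ and $\Ical_x(-H)$ at the first two walls from the Hilbert-scheme side, and the Brill--Noether inclusions follow exactly as you indicate.

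Part (ii), however, contains a genuine error in the wall structure on the $M$-side. You assert that the outermost wall from $M$ is governed by the spherical class $v(\Ocal_S)=(1,0,1)$, giving $Z_3$, and that $Z_1$ then arises at a further wall where ``$v(\Ocal_S)$ occurs with multiplicity two''. This is backwards and numerically impossible. The actual first wall from $M$ (the one producing the flop $g_4$) is governed by the lattice $\langle v,a_4\rangle$ with $a_4=(-1,1,-2)=-v(\Ocal_S(-1))$; the unique relevant decomposition is $v=a_4+b_4$ with $b_4=(1,1,1)=v(\Ical_x(1))$, and the destabilizing triangle for $\Ecal\in Z_1$ is
\[
\Ical_x(1)\longrightarrow \Ecal \longrightarrow \Ocal_S(-1)[1]\xrightarrow{[1]},
\]
equivalently $\Ecal$ is the cokernel of a map $\Ocal_S(-1)\hookrightarrow\Ical_x(1)$. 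This is what realizes $Z_1$ as the universal curve $\Ccal\hookrightarrow M$, a $\Pbb^4$-bundle over $S$. The wall involving $v(\Ocal_S)$ is the \emph{second} wall from $M$ (giving $g_3$), and it is there that one sees the sub-object $\Ocal_S\hookrightarrow\Ecal$ and hence $h^0(\Ecal)\neq 0$; this yields $Z_3$, not $Z_1$. There is no wall at which ``$\Ocal_S$ with multiplicity two'' destabilizes: the decomposition $v=2(1,0,1)+(-2,2,-3)$ does not occur in any of the rank-two lattices $\Hcal_i$.

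The link between $Z_1$ and $\BN^1(M)$ is therefore more indirect than you propose. In the paper it goes through the identification $M\cong M_H(0,2,-3)$ via $\Ecal\mapsto\sheafExt^1(\Ecal,\Ocal_S)(-1)$: the locus $Z_1$ is the closure of $\{\Lcal\in\Pic^1(D)\mid h^0(\Lcal)\neq 0\}$ transported under this isomorphism, and one then checks (via a cohomology computation, or equivalently your hyperelliptic description of $W^1_3(D)$) that this lands inside $\BN^1(M)$. Your hyperelliptic argument for the fibers is correct, but it establishes the Brill--Noether containment, not the wall-crossing identification.
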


\subsection*{Outline} The core of this paper is Section \ref{section models}, which is entirely an application of the results of \cite{MMP} to the Mukai system of rank two and genus two. In particular, we compute the walls in $\Mov(S^{[5]})$ and at each wall, we get a numerical characterization of the projective bundles that get contracted. The preceeding sections can be seen as the foundation for the geometrical interpretation of these computations. Precisely, in Section \ref{section mukai}, we collect the necessary information on the Mukai system. In Section \ref{section T}, we prove Proposition \ref{indeterminancy} by explicit considerations and likewise explicitly we study components of the appearing Brill--Noether loci leading to Proposition \ref{main prop} in Section \ref{section BN}. These components will later be identified with the exceptional loci of the transformations in Theorem \ref{main theo}.

\subsection*{Acknowledgements} This paper is part of my PhD thesis at the university of Bonn. I wish to heartily thank my advisor Daniel Huybrechts for his constant guidance and support. Further thanks go to Thorsten Beckmann and Georg Oberdieck for explaining me a lot of the techniques involved in this article. It's a pleasure to acknowledge helpful discussions with Alberto Cattaneo and Mirko Mauri. I thank them, too.

\section{The Mukai system of rank two and genus two}\label{section mukai}
We collect some results on the Mukai system of rank two and genus two. Let $(S,H)$ be a polarized K3 surface of genus $2$ such that $\Pic(S) = \Z \cdot H$. We assume that the linear system $|H|$ contains a smooth, irreducible curve, so $S$ is a double covering $\pi \colon S \rightarrow \Pbb^2$ ramified over a smooth sextic curve. We consider the moduli space $M=M_H(0,2,k)$ of $H$-Gieseker stable coherent sheaves on $S$ with Mukai vector $v=(0,2,k)$, where $k \equiv 1$ mod $2$. This is an irreducible holomorphic symplectic variety of dimension $10$. A point in $M_H(0,2,k)$ corresponds to a stable sheaf $\Ecal$ on $S$ such that $\Ecal$ is pure of dimension one with support in the linear system $|2H|$ and $\chi(\Ecal)=k$. Taking the (Fitting) support defines a Lagrangian fibration
\[ f \colon M_H(0,2,k) \longrightarrow B  \coloneqq |2H| \cong \Pbb^{5} \]
known as the \emph{Mukai system of rank two and genus two} \cite{Beau}, \cite{Mu}.\\

Since tensoring with $\Ocal_S(1)$ induces an isomorphism
\[ \tau_H \colon M_H(0,2,k) \xrightarrow\sim M_H(0,2,k+4),\]
it is immediate that the isomorphism class of $M_H(0,2,k)$ depends only on $k$ modulo 4. The following Lemma shows that actually the isomorphism class is the same for all odd $k$. Alternatively, $M_H(0,2,k)$ for odd $k$ can be characterized as the unique birational model of $S^{[5]}$ admitting a Lagrangian fibration (cf.\ Section \ref{section num walls}).

\begin{lem}\label{iso mod 2}
There is an isomorphism
\[ M_H(0,2,1) \longrightarrow M_H(0,2,-1),\ \Ecal \mapsto \Ecal^\vee \coloneqq \sheafExt^1_{\Ocal_S}(\Ecal,\Ocal_S). \]
In particular, all the moduli spaces $M_H(0,2,k)$ for odd $k$ are isomorphic.
\end{lem}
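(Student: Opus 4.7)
The plan is to show that $\Ecal \mapsto \Ecal^\vee := \sheafExt^1_{\Ocal_S}(\Ecal, \Ocal_S)$ defines an involution on the category of $H$-Gieseker stable pure $1$-dimensional sheaves, shifting the Euler characteristic by a sign, and is therefore an isomorphism of the two moduli spaces.

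First I would identify $\Ecal^\vee$ with the derived dual up to shift. Since $\Ecal$ is pure of dimension one on the smooth surface $S$, one has $\sheafHom(\Ecal, \Ocal_S) = 0$ (as $\Ecal$ is torsion) and $\sheafExt^2(\Ecal,\Ocal_S) = 0$ (by purity, the codimension of $\Supp(\Ecal)$ is one), so $R\sheafHom(\Ecal, \Ocal_S) = \Ecal^\vee[-1]$. Taking Mukai vectors and using $\td(S) = (1,0,2)$, the Chern character dualization $\ch(F^\vee) = (r,-c_1,\ch_2)$ then gives
\[ v(\Ecal^\vee) = (0, c_1(\Ecal), -\chi(\Ecal)). \]
In particular, if $v(\Ecal) = (0,2,1)$ then $v(\Ecal^\vee) = (0,2,-1)$, and $\Ecal^\vee$ has the same (scheme-theoretic and Fitting) support as $\Ecal$. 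Standard commutative algebra on the smooth surface $S$ shows that $\Ecal^\vee$ is again pure of dimension one and that $\Ecal^{\vee\vee} \cong \Ecal$; this is the content of the biduality for $d$-dimensional pure sheaves, see \cite[\S 1.1]{HL} (applied with $d=1$, $n=2$, $\omega_S=\Ocal_S$).

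Next I would argue that $H$-Gieseker stability is preserved under this duality. The functor $\Ecal \mapsto \Ecal^\vee$ is an exact contravariant equivalence on the category of pure $1$-dimensional sheaves, so a short exact sequence $0 \to \Ecal' \to \Ecal \to \Ecal'' \to 0$ of pure sheaves of dimension one dualizes to $0 \to \Ecal''^{\vee} \to \Ecal^\vee \to \Ecal'^{\vee} \to 0$. The reduced Hilbert polynomial of $\Fcal$ and $\Fcal^\vee$ satisfy $p(\Fcal^\vee) = -p(\Fcal)$ (in the sense that leading coefficient is the same and the constant term changes sign, as $c_1$ is preserved while $\chi$ flips), so the inequality defining Gieseker stability is reversed under dualization, which is exactly what is needed for the duality to swap the role of sub- and quotient sheaves and preserve stability. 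This is the standard argument of Huybrechts--Lehn \cite[Prop.~1.1.16, Thm.~1.1.17]{HL}, which I would invoke directly.

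Combining the above, $\Ecal \mapsto \Ecal^\vee$ is a well-defined morphism $M_H(0,2,1) \to M_H(0,2,-1)$, and biduality produces an inverse, yielding the claimed isomorphism. Finally, the isomorphisms $\tau_H$ from the preceding paragraph identify $M_H(0,2,k)$ with $M_H(0,2,k+4)$ for all odd $k$, so together with the duality isomorphism (which connects $k \equiv 1$ and $k \equiv -1 \pmod 4$) we deduce that all $M_H(0,2,k)$ with $k$ odd are isomorphic. The only step requiring care is the verification of stability preservation; if one prefers not to cite Huybrechts--Lehn, one can alternatively deduce it from the fact that for a destabilizing quotient $\Ecal \twoheadrightarrow \Qcal$ with $\Qcal$ pure of dimension one, $\Qcal^\vee \hookrightarrow \Ecal^\vee$ is a destabilizing subsheaf (and conversely), using that saturations with respect to pure quotients behave well under duality.
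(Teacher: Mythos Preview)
Your proof is correct and follows essentially the same approach as the paper's: both use the vanishing $\sheafExt^i_{\Ocal_S}(\Ecal,\Ocal_S)=0$ for $i\neq 1$ coming from purity, invoke biduality $\Ecal \cong \Ecal^{\vee\vee}$ from \cite[Prop~1.1.10]{HL}, and then check that $\Ecal^\vee$ is again $H$-Gieseker stable. The paper compresses the Mukai-vector computation and the stability argument to ``one easily sees,'' whereas you spell them out; your alternative description of the stability step (destabilizing quotients dualize to destabilizing subsheaves) is exactly the content of that remark.
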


\begin{proof}
Every $\Ecal \in M_H(0,2,1)$ is pure of dimension one. Therefore, $\sheafExt^i_{\Ocal_S}(\Ecal,\Ocal_S)=0$ for $i \neq 1$ and the natural map
\[ \Ecal \xlongrightarrow \sim \Ecal^{\vee\vee} = \sheafExt^1_{\Ocal_S}(\sheafExt^1_{\Ocal_S}(\Ecal,\Ocal_S),\Ocal_S)\]
is an isomorphism, \cite[Prop 1.1.10]{HL}. One easily sees that $\Ecal^\vee$ is again $H$-Gieseker stable. 
\end{proof}

In the following, we usually choose $k= -1$ and set
\[ M \coloneqq M_H(0,2,-1).\]
With this choice of $k$, a stable vector bundle of rank two and degree one on a smooth curve $C \in |H|$ defines a point in $M$.

\subsection{The linear systems \texorpdfstring{$|H|$}{} and \texorpdfstring{$|2H|$}{}}
The geometry of the Mukai system is closely related to the structure of the curves in the linear systems $|H|$ and $|2H|$. We have
\[H^0(S,\Ocal_S(k)) \cong H^0(\Pbb^2,\Ocal_{\Pbb^2}(k))\oplus H^0(\Pbb^2,\Ocal_{\Pbb^2}(k-3)),\]
and so in particular $H^0(S,\Ocal_S(k)) \cong H^0(\Pbb^2,\Ocal_{\Pbb^2}(k))$ if $k =1,2$. We conclude that all curves in $|H|$ and $|2H|$ are double covers of lines and conics in $\Pbb^2$, respectively. A curve in $|H|$ (resp.\ $|2H|$) has geometric genus $2$ (resp.\ $5$).\\

We use the Segre map
$m \colon |H| \times |H| \rightarrow |2H|$ to define the subloci
\begin{equation*}
\Delta \coloneqq m(\Delta_{|H|}) \subset \Sigma \coloneqq \im(m) \subset |2H|.
\end{equation*}
Then $\Sigma \cong \Sym^2|H|$ is the locus of non-integral curves, it is four-dimensional and its generic member is reduced and has two smooth irreducible components in the linear system $|H|$ meeting transversally in two points (we use $\rho(S) =1$). The subset $\Delta \cong |H| \cong \Pbb^2$ is the locus of non-reduced curves.

\subsection{Fibers of the Mukai morphism and structure of \texorpdfstring{$M$}{}}
The moduli space $M$ contains a dense open subset consisting of sheaves that are line bundles on their support. The restriction of the Mukai morphism to this locus is smooth \cite[Prop 2.8]{LP} and the image of the restricted morphism is $B \setminus \Delta$ \cite[Lem 3.5.3]{CRS}. In particular, $M_\Sigma \coloneqq f^{-1}(\Sigma)$ contains a dense open subset that parametrizes the pushforwards of line bundles, but  $M_\Delta \coloneqq f^{-1}(\Delta)$ does not.\\

Following \cite[Proposition 3.7.1]{CRS}, the fibers of the Mukai morphism $f \colon M \rightarrow B$ show the following characteristics:
\begin{equation}\label{fibertype}
f^{-1}(x)=\
\begin{cases}
\text{is reduced and irreducible} & \text{if}\ x\in B\setminus\Sigma\\
\text{is reduced and has two irreducible components} & \text{if}\ x\in \Sigma\setminus\Delta \\
\text{has two irreducible components with multiplicities} & \text{if}\ x\in \Delta.
\end{cases}
\end{equation}
Let us make this more precise for generic points:
\begin{itemize}
\item In the first case, let $x \in B \setminus \Sigma$ correspond to a smooth curve $D$, then $f^{-1}(x) \cong \Pic^3(D)$.
\item In the second case, let $x \in \Sigma \setminus \Delta$ correspond to the union $D = D_1 \cup D_2$ of two smooth curves meeting transversally in two points. Then $f^{-1}(x)$ contains a dense open subset parameterizing line bundles on $D$. The two irreducible components of $f^{-1}(x)$ correspond to line bundles of partial degree $(2,1)$ and $(1,2)$. 
\item In the third case, let $x \in \Delta$ correspond to a non-reduced curve with smooth underlying curve $C \in |H|$. Then $f^{-1}(x)$ has two non-reduced irreducible components, which we denote as follows
\begin{equation}\label{fiber decomp over delta}
M_{2C} \coloneqq f^{-1}(x)_{\rm red} = M_{2C}^0 \cup M_{2C}^1.
\end{equation}
The first component  $M_{2C}^0$ consists of those sheaves, that are pushed forward from the reduced curve $C$. With its reduced structure it is isomorphic to the moduli space of stable vector bundles of rank two and degree one on $C$. The other component $M_{2C}^1$ is the closure of those sheaves that can not be endowed with an $\Ocal_C$-module structure. All these sheaves fit into a short exact sequence
\begin{equation}\label{ext}
0 \rightarrow i_*(\Lcal(x)\otimes \omega_C^{-1}) \rightarrow \Ecal \rightarrow i_*\Lcal \rightarrow 0,
\end{equation}
where $i \colon C \hookrightarrow S$ is the inclusion, $\Lcal \in \Pic^1(C)$ is the torsionfree part of $\Ecal|_C$ and $x \in C$ is the support of the torsion part of $\Ecal|_C$. This extension is intrinsically associated to $\Ecal$, for details see \cite{He}.
\end{itemize}

The decomposition \eqref{fiber decomp over delta} also exists globally
\begin{equation}\label{comp of M_Delta} M_{\Delta} = f^{-1}(\Delta)_{\rm red} = M^0_{\Delta} \cup M^{1}_{\Delta}, \end{equation}
with $M_{\Delta}^0$ being a relative moduli space of stable vector bundles and $M_{\Delta}^1$ the closure of its complement \cite[Prop 3.7.23]{CRS}.

\section{The birational map \texorpdfstring{$T \colon S^{[5]} \dashrightarrow M$}{}}\label{section T}
In this section, we study the birational map $T \colon S^{[5]} \dashrightarrow M$ from the introduction, which is induced by the spherical twist at $\Ocal_S(-2)$. For the definition of the spherical twist, we refer to \cite[\S 8.1]{FM}.

\begin{prop}\label{birational map M S5}
The spherical twist at $\Ocal_S(-2)$ defines a birational map
\[T\colon S^{[5]} \dashrightarrow  M,\ \xi \mapsto T_{\Ocal_S(-2)}(\Ical_\xi)\otimes \Ocal_S(2)\]
which induces an isomorphism
\[ S^{[5]} \setminus \{ \xi \in S^{[5]} \mid h^0(\Ical_{\xi}(2)) \geq 2\} \xrightarrow \sim M\setminus ( \{ \Ecal \in M \mid h^0(\Ecal) \geq 1\} \cup M^0_\Delta).\]
In particular, $T$ is defined in $\xi \in S^{[5]}$ if there is a unique curve $D \in |2H|$ such that $\xi \subset D$. In this case,
\[ T(\xi) \cong \ker(\Ocal_S(2)|_D \rightarrow \Ocal_\xi).\]
\end{prop}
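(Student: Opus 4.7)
The plan is to analyze the defining triangle of the spherical twist and identify $T(\xi)$ by a short diagram chase. Recall that $T_{\Ocal_S(-2)}(\Ical_\xi)$ is by definition the cone of the evaluation map
\[ R\Hom(\Ocal_S(-2),\Ical_\xi)\otimes\Ocal_S(-2) \longrightarrow \Ical_\xi. \]
First I would compute $R\Hom(\Ocal_S(-2),\Ical_\xi)=R\Gamma(S,\Ical_\xi(2))$. The short exact sequence $0\to\Ical_\xi(2)\to\Ocal_S(2)\to\Ocal_\xi\to 0$, together with the vanishing of $H^{\geq 1}(\Ocal_S(2))$, gives $h^2(\Ical_\xi(2))=0$ and $\chi(\Ical_\xi(2))=\chi(\Ocal_S(2))-5=1$. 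Consequently $h^0(\Ical_\xi(2))\geq 1$ always, with equality iff $h^1(\Ical_\xi(2))=0$; and the equality condition is geometrically the existence of a unique $D\in|2H|$ through $\xi$.

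On the open locus $U=\{h^0(\Ical_\xi(2))=1\}\subset S^{[5]}$ the complex $R\Hom(\Ocal_S(-2),\Ical_\xi)$ is concentrated in degree zero and one-dimensional, so the defining triangle becomes a short exact sequence
\[ 0 \to \Ocal_S(-2) \xrightarrow{s} \Ical_\xi \to T_{\Ocal_S(-2)}(\Ical_\xi) \to 0, \]
where $s$ is (up to scalar) the section cutting out the unique $D\in|2H|$ containing $\xi$. Composing with the inclusion $\Ical_\xi\hookrightarrow\Ocal_S$ recovers the defining section of $D$, so the snake lemma applied to the morphism of short exact sequences from $(\Ocal_S(-2)\to\Ical_\xi\to T_{\Ocal_S(-2)}(\Ical_\xi))$ down to $(\Ocal_S(-2)\to\Ocal_S\to\Ocal_D)$ identifies $T_{\Ocal_S(-2)}(\Ical_\xi)\cong\Ical_{\xi,D}$. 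Twisting by $\Ocal_S(2)$ yields the formula $T(\xi)\cong\ker(\Ocal_S(2)|_D\twoheadrightarrow\Ocal_\xi)$ and a direct Chern-character computation confirms that its Mukai vector is $(0,2,-1)$.

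Next I would verify that $T|_U$ takes values in $M\setminus(\BN^0(M)\cup M^0_\Delta)$. Purity of $T(\xi)$ is immediate from its realization as a subsheaf of $\Ocal_S(2)|_D$. For stability: on integral $D$ any rank-$1$ torsion-free sheaf is automatically Gieseker-stable; on reducible $D\in\Sigma\setminus\Delta$ the genericity forced by $h^0(\Ical_\xi(2))=1$ excludes the would-be destabilizing sub- and quotient sheaves by a short case analysis on how $\xi$ distributes over the two components. Twisting $0\to\Ocal_S(-2)\to\Ical_\xi\to\Ical_{\xi,D}\to 0$ by $\Ocal_S(2)$ and taking global sections gives $h^0(T(\xi))=h^0(\Ical_\xi(2))-1=0$, so $T(\xi)\notin\BN^0(M)$. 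Finally, when $D=2C\in\Delta$ the sheaf $\Ocal_D(2)$ carries a genuine $\Ocal_{2C}$-module structure (not a pushforward from $C$), and this is inherited by the kernel, so $T(\xi)$ lies in $M^1_\Delta$ rather than in $M^0_\Delta$, matching the description \eqref{fiber decomp over delta} of the fiber over $\Delta$.

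For the inverse, given $\Ecal\in M$ with $h^0(\Ecal)=0$ and $\Ecal\notin M^0_\Delta$, the equality $\chi(\Ecal)=-1$ forces $h^1(\Ecal)=1$, hence by Serre duality $\Ext^1(\Ecal(-2),\Ocal_S(-2))\cong H^1(\Ecal)^\vee\cong\C$. The unique nontrivial extension $0\to\Ocal_S(-2)\to\Fcal\to\Ecal(-2)\to 0$ has Mukai vector $(1,0,-4)$ and trivial determinant; the hypothesis $\Ecal\notin M^0_\Delta$ is precisely what is needed to ensure $\Fcal$ is torsion-free, whence $\Fcal\cong\Ical_\xi$ for a unique $\xi\in S^{[5]}$ lying in $U$, which inverts $T$. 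The main obstacle I anticipate is exactly this bridge across $\Delta$: showing both that $T$ avoids $M^0_\Delta$ and that the inverse extension produces a bona fide ideal sheaf (rather than one with embedded components on the reduced curve underlying $D$) requires a careful local analysis of $\Ocal_{2C}$-modules versus $\Ocal_C$-modules, comparing the canonical extension \eqref{ext} attached to sheaves in $M^1_\Delta$ with the one constructed above.
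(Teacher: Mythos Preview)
Your outline follows the same broad strategy as the paper: analyze the defining triangle of the spherical twist, identify $T(\xi)$ as $\ker(\Ocal_S(2)|_D\to\Ocal_\xi)$, verify stability and the vanishing $h^0(T(\xi))=0$, and construct an inverse. Two points deserve comment.

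\textbf{The stability check over non-integral curves is not ``a short case analysis''.} The paper computes, for $D=D_1\cup D_2\in\Sigma\setminus\Delta$, that $T(\xi)$ is unstable precisely when $\lg(\Ocal_\xi\otimes\Ocal_{D_i})\geq 4$ for some $i$, and the same numerics apply over $\Delta$ (which you do not address for stability at all, only for avoiding $M^0_\Delta$). The subtle point is passing from this length condition back to $h^0(\Ical_\xi(2))\geq 2$. When $\xi$ has a fat point, $\lg(\Ocal_\xi\otimes\Ocal_{D_1})=4$ does \emph{not} obviously mean that a length-$4$ subscheme of $\xi$ lies on $D_1$; one needs the local commutative-algebra statement that if a punctual length-$n$ scheme restricts to length $n-1$ on an integral curve $C_1$, then it is contained in $C_1\cup C_2$ for every $C_2$ through the point (the paper isolates this as a separate lemma). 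Without this, your implication ``$h^0(\Ical_\xi(2))=1\Rightarrow T(\xi)$ stable'' has a gap precisely at non-reduced points of $\xi$.

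\textbf{Your inverse is different from the paper's and the obstacle you flag is real.} You build the inverse via the unique extension in $\Ext^1(\Ecal(-2),\Ocal_S(-2))\cong H^1(\Ecal)^\vee$, which is morally the inverse spherical twist, and then need $\Fcal$ torsion-free. This amounts to showing that the corresponding section of $\sheafExt^1(\Ecal,\Ocal_S)\cong\Ecal^\vee$ does not vanish along an entire component of $D$; over $\Sigma\setminus\Delta$ and $\Delta$ this is exactly as delicate as you fear. The paper sidesteps this by instead proving $\Hom(\Ecal,\Ocal_S(2)|_D)\neq 0$ via an explicit exact sequence built from the restriction of $\Ecal$ to the components of $D$; a nonzero such map is automatically injective (both sheaves are pure of the same support), and the cokernel furnishes $\xi$ directly. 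This approach trades your torsion-freeness problem for a concrete dimension count, and is what makes the argument go through uniformly over $\Sigma$ and $\Delta$.
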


We want to point out that, due to $\rho(S)=1$, there are actually no proper birational automorphisms of $S^{[5]}$. Precisely, we have
\[\Aut(S^{[5]})=\Bir(S^{[5]}) = \langle \id, \iota^{[5]}\rangle,\]
where $\iota^{[5]}$ is the automorphism induced by the involution $\iota$ on $S$ \cite[Thm 1.1]{Al}.
Hence, $T$ is the only birational morphism $S^{[5]} \dashrightarrow M$, up to precomposition with $\iota^{[5]}$. Also note that the subvariety $\BN^1(S^{[5]})=\{ \xi \in S^{[5]} \mid h^0(\Ical_{\xi}(2)) \geq 2\} \subset S^{[5]}$ is left invariant under $\iota^{[5]}$.

\begin{proof}[Proof of Proposition \ref{birational map M S5}]
By definition of the spherical twist, there is an exact triangle in $D^b(S)$
\[ R\Gamma(\Ical_\xi(2)) \otimes \Ocal_S(-2) \rightarrow \Ical_\xi \rightarrow T_{\Ocal_S(-2)}(\Ical_\xi) \xrightarrow{[1]}.\]
So, $T_{\Ocal_S(-2)}(\Ical_\xi)$ is a complex in degrees $-1$ and $0$, which is concentrated in degree $0$ if and only if $h^0(\Ical_\xi(2))=1$, as $\chi(\Ical_\xi(2))= 1$. In this case, $T_{\Ocal_S(-2)}(\Ical_\xi)$ is as stated. \\
Let $\xi \in S^{[5]}$ and $s \in H^0(\Ical_\xi(2))$. We claim that if
\[ \Ecal \coloneqq T(\xi) \cong \ker (\Ocal_S(2)|_{D} \rightarrow \Ocal_\xi) \]
is unstable, then $h^0(\Ical_\xi(2))\geq 2$. Here, $D$ is the curve defined by the composition of $s$ with the inclusion $\Ical_\xi(2) \hookrightarrow \Ocal_S(2)$.\\
First, assume $D \in D\setminus \Sigma$. Then $\Ecal$ 
is a rank one sheaf on the integral curve $D_s$ and necessarily stable.\\
Next, if $D \in \Sigma\setminus \Delta$ write $D =D_1 \cup D_2$. Then $\Ecal$ is stable, if and only if
\[ \chi(\Ecal\otimes \Ocal_S(-1)|_{D_i}) < \tfrac{\chi(\Ecal)}{2} = -\tfrac{1}{2} < \chi(\Ecal|_{D_i})\ \text{for}\ i=1,2.\]
Otherwise, the inclusion $\Ecal\otimes \Ocal_S(-1)|_{D_i} \hookrightarrow \Ecal$ or the restriction $\Ecal \twoheadrightarrow \Ecal|_{D_i}$ to one component is destabilizing. Conversely, every destabilizing subbundle or surjection factors through the above.
We find
\[
\chi(\Ecal|_{D_i}) = -\lg(\Ocal_\xi \otimes \Ocal_{D_i}) + \chi(\Ocal_S(2)|_{D_i}) + \lg(\sheafTor_1^{\Ocal_D}(\Ocal_\xi,\Ocal_{D_i})) = \lg(\Ocal_\xi \otimes \Ocal_{D_{3-i}})-2,\]
where we used $\lg(\sheafTor_1^{\Ocal_D}(\Ocal_{D_i},\Ocal_\xi)) = 5- \lg(\Ocal_\xi \otimes \Ocal_{D_1}) - \lg(\Ocal_\xi \otimes \Ocal_{D_2})$. Similarly,
\[\chi(\Ecal\otimes \Ocal_S(-1)|_{D_i}) = \lg(\Ocal_\xi \otimes \Ocal_{D_{2-i}})-4.\] 
Hence, $\Ecal$ is unstable if and only if $\lg(\Ocal_\xi \otimes \Ocal_{D_i})\geq 4$ for one $i=1,2$.
Without loss of generality assume that $\lg(\Ocal_\xi \otimes \Ocal_{D_1})\geq 4$. There are two cases. Either there is a reduced point $x \in \xi$ such that $\xi \setminus \{x\} \subset D_1$. Otherwise, $\xi_{\rm red} \subset D_1$ and there is a point $x \in \xi$ whose multiplicity drops by one, when restricting to $D_1$. In both cases, $D_2$ can move in the pencil of curves in $|H|$ passing through $x$ and thus $h^0(\Ical_\xi(2)) \geq 2$, cf.\ Lemma \ref{Hilfslemma} below.\\
Finally, if $D = 2C \in \Delta$, the above arguments remain valid with $D_1 = D_2 =C$.
This is, if $\Ecal$ is unstable, then either $\lg(\Ocal_\xi \otimes \Ocal_C)=4$ and Lemma \ref{Hilfslemma} applies  or $\xi$ is completely contained in $C$. But then $\xi \subset C \cup C'$ for every curve $C' \in |H|$.\\
So far, we have proven that $T$ is well-defined for all $\xi \in S^{[5]}$ such that $h^0(\Ical_\xi(2))=1$. A  birational morphism between projective irreducible holomorphic symplectic manifolds is an isomorphism on the regular locus \cite[2.2]{Huy97}. Therefore, it is left to see that
\[ T (S^{[5]} \setminus \{ \xi \in S^{[5]} \mid h^0(\Ical_{\xi}(2)) \geq 2\}) = M\setminus ( \{ \Ecal \in M \mid h^0(\Ecal) \geq 1\} \cup M^0_\Delta).\]
 For sure, we have an inclusion from left to right, since $H^0(T(\xi))=0$, whenever $T$ is defined and over $\Delta$, the sheaf $T(\xi)$ always has rank one on the reduced curve. More precisely, let $\xi \in S^{[5]}$ such that $B(\xi) = 2C$. Then $\Ecal \coloneqq T(\xi)$ fits into an extension on $2C$
 \begin{equation*}
 0 \rightarrow \Lcal(x) \otimes \omega_C^{-1} \rightarrow \Ecal \rightarrow \Lcal \rightarrow 0,
 \end{equation*}
 where $x$ is the support of $\sheafTor_1^{\Ocal_{2C}}(\Ocal_C,\Ocal_\xi)$ and $\Lcal = \omega_C^{\otimes 2}(-\xi \cap C) \in \Pic^1(C)$.\\
The converse inclusion is clear over $B \setminus \Sigma$. 
Let $D=D_1 \cup D_2 \in \Sigma \setminus \Delta$ and $\Ecal \in f^{-1}(D)$ such that $h^0(\Ecal) =0$. We have to find $s \colon \Ecal \rightarrow \Ocal_S(2)|_{D}$. Then $\xi \coloneqq \Supp(\coker(s)) \in S^{[5]}$ and $\Ecal = T(\xi)$. Assume first that $\Lcal_i \coloneqq \Ecal|_{D_i}$ is torsionfree and without loss of generality that $\chi(\Lcal_i) = i-1$, i.e.\ for smooth $D_i$, we have $\Lcal_i \in \Pic^i(D_i)$. Now, we have an exact sequence
\begin{multline}\label{seq to compute image}
0 \rightarrow \Hom(\Lcal_1,\Ocal_S(1)|_{D_1}) \rightarrow  \Hom(\Ecal,\Ocal_S(2)|_D) \rightarrow  \Hom(\Lcal_2,\Ocal_S(2)|_{D_{2}}) \\
\rightarrow \Ext^1(\Lcal_1,\Ocal_S(1)|_{D_1}) \rightarrow \ldots .
\end{multline}
If $\hom(\Lcal_1,\Ocal_S(1)|_{D_1})=1$, then everything is clear and if $\hom(\Lcal_1,\Ocal_S(1)|_{D_1})=0$, then also $\ext^1(\Lcal_1,\Ocal_S(1)|_{D_1})=0$. Thus in this case $\Hom(\Ecal,\Ocal(2)|_D) \cong \Hom(\Lcal_2,\Ocal(2)|_{D_2}) \neq 0$. (Actually, we must have $\hom(\Ecal,\Ocal(2)|_D)=1$ because we assumed $h^0(\Ecal)=0$). Next, if $\Ecal|_{D_i}$ has torsion, then $\Ecal|_{D_i} \cong \Lcal_i \oplus \Tcal$, where $\Lcal_i$ is torsionfree with $\chi(\Lcal_i) = 0$ and $\Tcal$ is supported on the intersection $D_1 \cap D_2$ with $\lg(\Tcal)=1$. In particular, also in this case the sequence \eqref{seq to compute image} proves that $\Hom(\Ecal,\Ocal(2)|_D)\neq 0$.\\
Over $\Delta$ the argument is the same. Let $D =2C \in \Delta$ and assume that we are given $\Ecal \in M_{2C}^1 \setminus M_{2C}^0$ such that $h^0(\Ecal)=0$. Again, $\Ecal= T(\xi)$ if and only if $\Hom(\Ecal,\Ocal_S(2)|_{2C}) \neq 0$. This time, we have $\Ecal|_C = \Lcal \oplus \Ocal_x$ for some $\Lcal \in \Pic^1(C)$ and $x \in C$ and the sequence \eqref{seq to compute image} with $C=D_1 = D_2$ and $\Lcal = \Lcal_1 = \Lcal_2$ proves what we need.
\end{proof}

We will see in Proposition \ref{birational models}, how the indeterminancy of $T$ can be resolved by a sequence of blow-ups and blow-downs.

\begin{lem}\label{Hilfslemma}
Let $\xi \subset S$ be a zero-dimensional subscheme of length $n$ supported in a point $p\in S$. Assume there is an integral curve $C_1 \subset S$ such that $\lg(\Ocal_\xi \otimes \Ocal_{C_1})=n-1$. Then
\[ \xi \subset C_1 \cup C_2 \]
for every curve $C_2$ passing through $p$.
\end{lem}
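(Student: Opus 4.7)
The plan is to reduce the claim to a statement in the local ring $A := \Ocal_{S,p}$ with maximal ideal $\mfrak$. Denote by $I_\xi \subset A$ the stalk of $\Ical_\xi$, and let $f_i \in A$ be a local equation for $C_i$ at $p$. Since $p \in C_2$ by hypothesis, $f_2 \in \mfrak$; the containment $\xi \subset C_1 \cup C_2$ (with its natural scheme structure given locally by $(f_1 f_2)$) is equivalent to the algebraic statement $f_1 f_2 \in I_\xi$. The assumption $\lg(\Ocal_\xi \otimes \Ocal_{C_1}) = n-1$ (together with $\lg(A/I_\xi)=n$) forces $f_1 \in \mfrak$ as well: otherwise $f_1$ would be a unit, we would have $I_\xi + (f_1) = A$ and $\lg(A/(I_\xi + (f_1)))=0 \neq n-1$.

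The main step is the following observation about the $A$-submodule $N := (f_1 + I_\xi)/I_\xi \subset A/I_\xi$. From the short exact sequence
\[ 0 \longto N \longto A/I_\xi \longto A/(I_\xi + (f_1)) \longto 0 \]
and the length hypothesis one reads off $\lg_A(N) = n - (n-1) = 1$. But any $A$-module of length one is isomorphic to the residue field $A/\mfrak$, and in particular is annihilated by $\mfrak$. Therefore $\mfrak \cdot f_1 \subset I_\xi$.

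Since $f_2 \in \mfrak$, we conclude $f_1 f_2 \in \mfrak \cdot f_1 \subset I_\xi$, which is exactly the desired scheme-theoretic inclusion $\xi \subset C_1 \cup C_2$. There is no genuine obstacle here: once one passes to the local ring, the argument is a one-line consequence of the fact that a simple module over a local ring is its residue field. The only mild care needed is to identify the scheme structure on $C_1 \cup C_2$ that is relevant for the intended application in Proposition~\ref{birational map M S5}, namely the divisor-theoretic one with local ideal $(f_1 f_2)$, so that $C_1 \cup C_2$ is an element of $|2H|$ containing $\xi$ as $C_2$ varies in a pencil.
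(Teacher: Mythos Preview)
Your proof is correct and follows essentially the same route as the paper: reduce to the local ring, observe that $\lg\bigl((I_\xi+(f_1))/I_\xi\bigr)=1$, and deduce $\mfrak\cdot f_1\subset I_\xi$. The only cosmetic difference is in the last step---the paper shows $(I,f)/(I,f\mfrak)\neq 0$ via a short Nakayama-style contradiction, whereas you invoke directly that a length-one module over a local ring is the residue field; both amount to the same observation.
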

\begin{proof}
We can assume that $S=\Spec A$, where $A$ is a local ring with maximal ideal $\mfrak$. Moreover, $\xi = V(I)$, and $C_1= V(f)$ for some $f \in A$. By assumption, $\lg(A/I)=n$ and $\lg(A/(I,f))=n-1$, hence $\lg((I,f)/I)=1$. We we want to show that $f\cdot \mfrak \subset I$ or equivalently $(I,f \cdot \mfrak)= I$. We have a short exact sequence of $\C$-vector spaces
\[ 0 \rightarrow (I,f\cdot \mfrak)/ I \rightarrow (I,f)/I \rightarrow (I,f)/(I,f\cdot \mfrak) \rightarrow 0, \]
where the middle term is of dimension one. Hence, $(f \cdot \mfrak,I)= I$ is true if and only if the right outer term is non-zero. Assume $(I,f)=(I,f\cdot \mfrak)$, then we can write $f = af +b$ for some $a \in \mfrak$ and $b \in I$. This implies $(1-a)f \in I$ and thus $f \in I$, which is a contradiction to our assumption.
\end{proof}

\section{Brill--Noether loci in \texorpdfstring{$M$}{} and \texorpdfstring{$S^{[5]}$}{}}\label{section BN}
In Proposition \ref{birational map M S5}, we established the isomorphism
\[ T \colon S^{[5]} \setminus \{ \xi \in S^{[5]} \mid h^0(\Ical_{\xi}(2)) \geq 2\} \xrightarrow \sim M\setminus ( \{ \Ecal \in M \mid h^0(\Ecal) \geq 1\} \cup M^0_\Delta).\]
In this section, we undertake a hands-on analysis of certain components of the Brill--Noether loci appearing here. Namely,
\[ \BN^i(S^{[5]}) \coloneqq \{ \xi \in S^{[5]} \mid h^0(\Ical_{\xi}(2)) \geq i+1\} \subset S^{[5]}\]
and
\[ \BN^i(M) \coloneqq \{ \Ecal \in M \mid h^0(\Ecal) \geq i+1 \} \subset M\]
for $i\geq 0$. The first is also an actual Brill--Noether locus after the identification
\[ S^{[5]} \cong M_H(1,2,0),\ \Ical_\xi \mapsto \Ical_\xi(2).\]
All these Brill--Noether loci generically have the structure of a projective bundle, which we explicitly state for certain components in Propositions \ref{W is jumping locus} and \ref{Z1}.

\subsection{Brill--Noether loci in \texorpdfstring{$M_H(1,2,0) \cong S^{[5]}$}{}}
We study the Brill--Noether locus in $S^{[5]}$ or rather $M_H(1,2,0)$ first. Our first result shows that the only non-trivial cases are $i=1,2$. For $\xi \in S^{[5]}$, we introduce the linear subspace
\[ B(\xi) \coloneqq \Pbb(H^0(S,\Ical_\xi(2)))  = \{ D \in |2H| \mid \xi \subset D\} \subset |2H|. \]

\begin{lem}\label{dim lemma}\begin{enumerate}[\rm (i)]
\item We have the inequalities
\[0 \leq h^0(S,\Ical_\xi(1)) \leq 1 \leq h^0(S,\Ical_\xi(2)) \leq 3.\]
\item If $h^0(S,\Ical_\xi(1)) = 1$, then $h^0(\Ical_\xi(2)) = 3$ and
\[ B(\xi) = m(C \times |H|) \subset \Sigma \subset |2H|, \]
where $C \in |H|$ is the unique curve containing $\xi$.
\end{enumerate}
\end{lem}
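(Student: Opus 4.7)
The plan is to prove the three numerical bounds in (i) in turn, using the conclusion of (ii) to handle the non-trivial sub-case of $h^0(\Ical_\xi(2))\leq 3$. Two of the inequalities are immediate: $h^0(\Ical_\xi(1))\geq 0$ is trivial, while $h^0(\Ical_\xi(2))\geq 1$ follows from $h^0(\Ocal_S(2))=h^0(\Pbb^2,\Ocal_{\Pbb^2}(2))=6>5=\lg(\xi)$. The bound $h^0(\Ical_\xi(1))\leq 1$ is a Bezout argument: two linearly independent sections of $\Ical_\xi(1)$ would cut out two distinct curves $C_1,C_2\in |H|$ both containing $\xi$, and since $\Pic(S)=\Z\cdot H$ every element of $|H|$ is integral, so $\lg(\xi)\leq C_1\cdot C_2=H^2=2$, a contradiction.

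For (ii), let $C\in|H|$ be the unique curve containing $\xi$. Every divisor of the form $C+C'$ with $C'\in |H|$ contains $\xi$, so $m(C\times|H|)\subset B(\xi)$, giving $h^0(\Ical_\xi(2))\geq 3$. For the reverse inclusion, note that if some $D\in B(\xi)$ did not have $C$ as a component, then $\xi\subset C\cap D$ would have length at most $C\cdot D=H\cdot 2H=4$, contradicting $\lg(\xi)=5$. Hence $B(\xi)=m(C\times|H|)$ and $h^0(\Ical_\xi(2))=3$, which in particular establishes the upper bound $h^0(\Ical_\xi(2))\leq 3$ in the case $h^0(\Ical_\xi(1))=1$.

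It remains to treat the case $h^0(\Ical_\xi(1))=0$, where one must exploit the double-cover structure. I would descend to $\Pbb^2$ using the pullback isomorphism $\pi^*\colon H^0(\Pbb^2,\Ocal_{\Pbb^2}(k))\isoto H^0(S,\Ocal_S(k))$ for $k=1,2$. Let $W\coloneqq\pi(\xi)\subset\Pbb^2$ be the scheme-theoretic image; the defining injection $\Ocal_W\hookrightarrow\pi_*\Ocal_\xi$ gives $h^0(\Ical_\xi(k))=h^0(\Ical_W(k))$, so the hypothesis translates into $W$ being contained in no line of $\Pbb^2$. Since $W$ is zero-dimensional, one can choose a linear form $\ell\in H^0(\Ocal_{\Pbb^2}(1))$ whose zero locus avoids $\Supp(W)$; then $\ell|_W$ is a unit in the Artinian ring $\Ocal_W$, and multiplication by $\ell$ gives an inclusion $\ell\cdot H^0(\Ocal_{\Pbb^2}(1))\subset H^0(\Ocal_{\Pbb^2}(2))$ whose image under evaluation to $H^0(\Ocal_W)$ has the same dimension as the ordinary evaluation of $H^0(\Ocal_{\Pbb^2}(1))$, namely $3-h^0(\Ical_W(1))=3$. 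Consequently the evaluation $H^0(\Ocal_{\Pbb^2}(2))\to H^0(\Ocal_W)$ has rank at least $3$, yielding $h^0(\Ical_W(2))\leq 6-3=3$.

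The main obstacle is this final step: the bound $h^0(\Ical_\xi(2))\leq 3$ cannot be obtained from the naive count $h^0(\Ocal_S(2))-\lg(\xi)=1$, and some genuine input beyond dimension counting is needed. Descending through $\pi$ and applying the multiplication-by-a-generic-linear-form trick is the cleanest such input, reducing the bound to the standard fact that a zero-dimensional subscheme of $\Pbb^2$ not contained in any line imposes at least $3$ conditions on conics.
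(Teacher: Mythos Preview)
Your proof is correct and takes a genuinely different route from the paper's for the key bound $h^0(\Ical_\xi(2))\leq 3$. The paper splits into two cases: if $B(\xi)\subset\Sigma$, it invokes the projective-geometric fact that $\Sym^2\Pbb^2\subset\Pbb^5$ contains no $\Pbb^3$; otherwise it chooses a smooth $D\in|2H|$ through $\xi$, analyzes the restriction diagram for $\Ical_\xi(2)$ on $D$, and concludes via Clifford's theorem that $h^0(D,\omega_D(-\xi))\leq 2$. Your argument instead pushes everything down to $\Pbb^2$ via the identification $H^0(S,\Ical_\xi(k))\cong H^0(\Pbb^2,\Ical_W(k))$ for $k\leq 2$ (where $W=\pi(\xi)$ is the scheme-theoretic image), and then uses the elementary multiplication-by-a-generic-linear-form trick to see that a subscheme of $\Pbb^2$ lying on no line imposes at least three conditions on conics. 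For the bound $h^0(\Ical_\xi(1))\leq 1$ and for part (ii) you use clean B\'ezout arguments ($H^2=2$, $H\cdot 2H=4$), whereas the paper runs a restriction-sequence computation for the former and an explicit short exact sequence $0\to\Ocal_S(1)\to\Ical_\xi(2)\to\ker(\Ocal_S(2)|_C\to\Ocal_\xi)\to 0$ for the latter; these are equivalent in spirit. Your approach is more elementary---no Clifford, no facts about secant varieties---but is specific to the genus-two double-cover situation, while the paper's Clifford argument would adapt to other polarized K3 surfaces.
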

\begin{proof}
From the short exact sequence
\[ 0 \rightarrow \Ical_\xi(2) \rightarrow \Ocal_S(2) \rightarrow \Ocal_\xi \rightarrow 0, \]
it follows that $h^0(S,\Ical_\xi(2)) \geq 1$ for all $\xi \in S^{[5]}$.\\
First, assume that $B(\xi) \subset \Sigma$ then $\dim B(\xi) \leq 2$, as there is no three-dimensional linear subspace of $\Pbb^5$ that is contained in $\Sigma = \Sym^2\Pbb^2$. So, in order to show $h^0(S,\Ical_\xi(2)) \leq 3$, we can assume that $B(\xi) \cap B\setminus \Sigma \neq \emptyset$, and we can even assume that there is a smooth curve $D \in |2H|$ such that $\xi \subset D$.
We have compatible long exact sequences
\[
\xymatrix{
0 \ar[r] & 0=H^0(\Ical_\xi) \ar@{^(->}[d] \ar[r] & H^0(\Ical_\xi(2)) \ar@{^(->}[d] \ar@{^(->}[r] \ar[dr]^\alpha & H^0({\Ical_\xi}(2)|_D) \ar[d] \ar[r] & \ldots \\
0 \ar[r] & H^0(\Ocal_S)  \ar[r] & H^0(\Ocal_S(2)) \ar[r]  & H^0(\omega_D) \ar[r] & 0 ,
}
\]
where we inserted $\Ocal_S(2)|_D \cong \omega_D$. Moreover,
\[\Ical_\xi(2)|_D \cong \omega_D(-\xi)\oplus \Ocal_\xi.\]
Thus $H^0(\Ical_\xi(2)|_D)= H^0(\Ocal_\xi) \oplus H^0(\omega_D(-\xi))$ and the first summand is the kernel of the third vertical map. Hence, $\dim\im(\alpha) \leq h^0(D,\omega_D(-\xi))$.  Together, this gives
\[ h^0(\Ical_\xi(2)) \leq \dim \im(\alpha) + h^0(\Ocal_S) \leq h^0(D,\omega_D(-\xi)) +1 = h^0(D,\Ocal_D(\xi)) \leq 3, \]
where the last inequality uses Clifford's theorem \cite[IV Thm 5.4]{Hart}.\\
Next, assume that $\xi \subset C$ for a curve $C \in |H|$. Then the analogous considerations yield
\[ h^0(\Ical_\xi(1)) \leq \dim \ker(H^0(\Ocal_S(1))\rightarrow H^0(\Ocal_S(1)|_C)) + h^0(\Ical_\xi(1)|_C) - 5 = 1+ 5-5 = 1.\]
This finishes the proof of (i).\\

Next, we prove (ii). Any non-zero section $s \in H^0(\Ical_\xi(1))$ induces a short exact sequence
\[  0 \rightarrow \Ocal_S(1) \xrightarrow{s} \Ical_\xi(2) \rightarrow \ker (\Ocal_S(2)|_C \rightarrow \Ocal_\xi) \rightarrow 0,\]
which gives the isomorphism $H^0(\Ocal_S(1)) \cong H^0(\Ical_\xi(2))$ that translates into the statement for $B(\xi)$.
\end{proof}

Next, we have two strategies to find explicit components of $\BN^i(S^{[5]})$. The first relies on the observation that, given a curve $D \in |2H|$ and $x \in D$, then also $\iota(x) \in D$, where $\iota \colon S \rightarrow S$ is the covering involution of $\pi \colon S \rightarrow |H| \cong \Pbb^2$. Hence, the subvarieties in $S^{[5]}$ which parameterize subschemes that are partly invariant under $\iota$ are candidates to provide a component of the Brill--Noether locus. The second is based on Lemma \ref{dim lemma}(ii). Namely, we parameterize subschemes that are already or almost contained in a curve of the primitive linear system $|H|$.

\begin{example}\label{P}
As mentioned in the introduction, we have an embedding
\[\Pbb^2 \subset S^{[2]},\ x \mapsto \pi^{-1}(x).\]
We get generically injective rational maps
\[ \begin{array}{lcr}
g_3 \colon \Pbb^2 \times S^{[3]} \dashrightarrow S^{[5]} & \text{and} & g_1 \colon \Pbb^2 \times \Pbb^2 \times S \dashrightarrow S^{[5]}
\end{array}\]
and set
\[ P_i \coloneqq \overline{\im(g_i)} \subset S^{[5]}\ \text{for}\ i=1,3.\]
Clearly, $P_3 \subset \BN^1(S^{[5]})$ and $P_1 \subset \BN^2(S^{[5]})$. Moreover, we note $\codim P_i =5-i$ and $P_i$ is generically a $\Pbb^{5-i}$-bundle over $S^{[i]}$.
\end{example}

\begin{example}
We define
\begin{align*}
W_2 \coloneqq \{ \xi \in S^{[5]}\mid H^0(\Ical_\xi(1)) \neq 0 \} = \{ \xi \in S^{[5]}\mid \text{there is}\ C \in |H|\ \text{such that}\ \xi \subset C\}.
\end{align*}
Then $W_2$ is the closure of the image of the generically injective rational map
\[ \Sym^5_{\Ccal_{|H|}/|H|}(\Ccal_{|H|}) \dashrightarrow S^{[5]}\]
and therefore $\dim W_2 = 7$. Here, $\Ccal_{|H|} \rightarrow |H|$ is the universal curve.
We also define
\[ W_3 \coloneqq \{ \xi \in S^{[5]}\mid \text{there is}\ x \in \xi\ \text{and}\ C \in |H|\ \text{such that}\ \xi\setminus \{x\} \subset C\},\]
i.e.\ $W_3$ is the closure of the image of the generically injective rational map
\[ \Sym^4_{\Ccal_{|H|}/|H|}(\Ccal_{|H|})\times S \dashrightarrow S^{[5]}.\]
We conclude that $\dim W_3 = 8$.\\

Clearly, $W_2 \subset W_3$. By Lemma \ref{dim lemma}(i), $W_2 \subset \BN^2(S^{[5]})$. Similarly, one sees $W_3 \subset \BN^1(S^{[5]})$. Namely, if $\xi \setminus \{x\} \subset C$ as in the definition of $W_3$, then $\xi \subset C \cup C'$ for every $C' \in |H|$ such that  $x \in C'$ and thus $\dim B(\xi) \geq 1$.
\end{example}

The subvarieties $W_2$ and $W_3$ also appear in \cite[Thm 6.4]{KLM} as examples of algebraically coisotropic subvarieties in $S^{[5]}$. We can give the precise structure of a projective bundle.

\begin{prop}\label{W is jumping locus}
\begin{enumerate}[\rm (i)]
\item
The subvariety $W_2$ is a $\Pbb^3$-bundle over $M_H(0,1,-6)$. More precisely, let $\Ecal^{-6}_{\rm univ}$ be the universal bundle on $M_H(0,1,-6)\times S$ and define the sheaf
\[\Ecal_2 \coloneqq {p_1}_*R\sheafHom(\Ecal^{-6}_{\rm univ},p_2^*\Ocal_S(-1)).\]
on $M_H(0,1,-6)$. Then $\Ecal_2$ is a vector bundle and
\[ W_2 \cong \Pbb(\Ecal_2).\]
In particular, $W_2$ is smooth.\\
\item
The subvariety $W_3\setminus W_2$ is a $\Pbb^2$-bundle over an open subset of $S \times M_H(0,1,-5)$. More precisely, let $\Ecal^{-5}_{\rm univ}$ be the universal sheaf on $M_H(0,1,-5) \times S$ and $\Ical_\Delta$ the ideal sheaf of the diagonal $\Delta \subset S \times S$ and define the sheaf
\[ \Ecal_3 \coloneqq {p_{12}}_*R\sheafHom_{}(p_{23}^*\Ecal^{-5}_{\rm univ},p_3^*\Ocal_S(-1)\otimes p_{13}^*\Ical_\Delta)\]
on $S \times M_H(0,1,-5)$. 
Then $\Ecal_3$ a vector bundle on an open set $U \subset S \times M_H(0,1,-5)$
and
\[  W_3\setminus W_2 \cong \Pbb(\Ecal_3|_U) .\]
\end{enumerate}
\end{prop}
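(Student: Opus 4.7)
The overall strategy, shared by both parts, is to identify the sheaves in $M_H(0,1,-k)$ whose appropriate extensions by a twist of $\Ocal_S(-1)$ recover the ideal sheaf of an element of $W_i$, and then apply cohomology and base change to the relative $\Ext$ sheaf of the universal family, realizing $W_2$ (respectively $W_3 \setminus W_2$) as the projectivization of a vector bundle. I describe part (i) in detail; part (ii) is entirely parallel.

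For part (i), the starting observation is that for $\xi \in W_2$ the curve $C \in |H|$ containing $\xi$ is unique (since $H^2 = 2 < 5$ and every $C \in |H|$ is integral by $\rho(S) = 1$), so the corresponding section of $\Ocal_S(1)$ yields a canonical inclusion $\Ocal_S(-1) \hookrightarrow \Ical_\xi$ up to scalar. Its cokernel $\Fcal$ has Mukai vector $(0,H,-6)$ by a direct computation and is $H$-stable because $C$ is integral, so we obtain a morphism $\varphi_2 \colon W_2 \to M_H(0,1,-6)$ together with a lift to $\Pbb(\Ext^1(\Fcal,\Ocal_S(-1)))$ carrying the extension class. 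To check that $\Ecal_2$ is locally free of rank $4$, I would verify via cohomology and base change that $\Hom(\Fcal,\Ocal_S(-1)) = \Ext^2(\Fcal,\Ocal_S(-1)) = 0$ for every $\Fcal$ in the moduli. The first vanishing is purity of $\Fcal$; the second, by Serre duality, reduces to $H^0(\Fcal(1)) = 0$, which follows from Gieseker stability since the reduced Hilbert polynomial of $\Fcal(1)$ equals $2n-4$, strictly below $p_{\Ocal_C}(n) = 2n-1$. The rank then follows from Riemann--Roch.

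The inverse map $\Pbb(\Ecal_2) \to W_2$ is built from the universal extension on $\Pbb(\Ecal_2) \times S$. The main obstacle I expect is to verify that each fiber $\Ical$ of this family is torsion-free, hence of the form $\Ical_\xi$ for a unique $\xi \in S^{[5]}$: zero-dimensional torsion would inject into the pure sheaf $\Fcal$, which is impossible, but ruling out one-dimensional torsion requires more care. A one-dimensional torsion subsheaf $T \subset \Ical$ injects into $\Fcal$ since it cannot meet $\Ocal_S(-1)$; Gieseker stability of $\Fcal$ then forces either $T = \Fcal$ -- which splits the sequence and contradicts non-triviality of the class -- or a strict inequality of reduced Hilbert polynomials that one shows is incompatible with the class being non-split (equivalently, with the vanishing $\Hom(\Fcal,\Ical) = 0$ obtained from the long exact sequence, where the connecting map sends $\id_\Fcal$ to the non-zero extension class). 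Once torsion-freeness is established, $\det(\Ical) = \Ocal_S$ gives $\Ical^{\vee\vee} = \Ocal_S$, so $\Ical \subset \Ocal_S$ defines the required length-$5$ subscheme $\xi$, and $\xi \subset C$ by construction. That the two maps $W_2 \leftrightarrow \Pbb(\Ecal_2)$ are mutually inverse is then a routine universal property argument, and smoothness of $W_2$ is inherited from the projective bundle.

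Part (ii) runs entirely parallel on the open subset $U \subset S \times M_H(0,1,-5)$ where $x \notin \Supp(\Fcal')$. Given $\xi \in W_3 \setminus W_2$, uniqueness of the pair $(x,C)$ with $\xi \setminus \{x\} \subset C$ follows because any two distinct choices $(x_1,C_1), (x_2,C_2)$ would force four points of $\xi$ to lie in $C_1 \cap C_2$, contradicting $|C_1 \cap C_2| \leq H^2 = 2$. A snake-lemma chase using the sequences $0 \to \Ocal_S(-1) \to \Ocal_S \to \Ocal_C \to 0$ and $0 \to \Ical_x \to \Ocal_S \to \Ocal_x \to 0$, combined with the inclusion $\xi' \subset \xi$, produces the canonical extension $0 \to \Ocal_S(-1) \otimes \Ical_x \to \Ical_\xi \to \Fcal' \to 0$ with $\Fcal' \in M_H(0,1,-5)$. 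On $U$, the sheaves $\Fcal'$ and $\Ocal_x$ have disjoint supports, hence $\Ext^*_S(\Fcal',\Ocal_x) = 0$ identifies $\Ext^i_S(\Fcal',\Ocal_S(-1)\otimes \Ical_x) \cong \Ext^i_S(\Fcal',\Ocal_S(-1))$, so the rank-$3$ vector bundle computation reduces to a minor variant of the one in part (i), and the same inversion argument yields $W_3 \setminus W_2 \cong \Pbb(\Ecal_3|_U)$.
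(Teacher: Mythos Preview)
Your approach is essentially the paper's: parameterize the relevant extensions via the universal family, verify constancy of $\dim\Ext^1$ by stability and Serre duality, build the map $\Pbb(\Ecal_i)\to S^{[5]}$ from the universal extension, and check it is an isomorphism onto $W_2$ (resp.\ $W_3\setminus W_2$). Your $\Ext$-vanishing is in fact spelled out more carefully than in the paper.

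One soft spot: in your torsion-freeness argument, the case of a proper one-dimensional torsion subsheaf $T\subsetneq\Fcal$ is not ruled out by the vanishing $\Hom(\Fcal,\Ical)=0$ --- that vanishing only excludes a splitting $\Fcal\hookrightarrow\Ical$, not an inclusion of a proper subsheaf $T\hookrightarrow\Ical$. The paper's argument is cleaner here: since $\sheafHom(\Fcal,\Ocal_S(-1))=0$, one has $\Ext^1_S(\Fcal,\Ocal_S(-1))\cong H^0(\sheafExt^1_S(\Fcal,\Ocal_S(-1)))$, and $\sheafExt^1_S(\Fcal,\Ocal_S(-1))$ is a torsion-free rank-one sheaf on the integral curve $C$, so a nonzero global section is nonzero in every stalk; hence the extension is locally non-split everywhere and the middle term is torsion-free by \cite[Prop~1.1.10]{HL}. (Alternatively, your line can be completed by observing $c_1(T)=H$ forces $(\Ical/T)^{**}\cong\Ocal_S(-1)$, hence $\Fcal/T=0$.)

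For the immersion property the paper does not leave it as ``routine'' but invokes rigidity of such extensions, deduced from $\Hom(\Ocal_S(-1),\Fcal)=0$ (resp.\ $\Hom(\Ical_x(-1),\Fcal')=0$) via \cite[Thm~6.4.5]{FGA}; this is the one ingredient your outline should name explicitly.
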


\begin{proof}
(i) For every $\Ecal \in M_H(0,1,-6)$ we have the base change map
\[ \Ecal_2(\Ecal) \rightarrow H^0(S,R\sheafHom(\Ecal,\Ocal_S(-1))) \cong \Ext^1_S(\Ecal,\Ocal_S(-1)) \]
and $\Ext^i_S(\Ecal,\Ocal_S(-1)) = 0$ for $i\neq 1$ because $\Ecal$ is stable of rank $0$. Hence $\dim\Ext^1(\Ecal,\Ocal_S(-1)) = 4$ for all $\Ecal \in M_H(0,1,-6)$ and $\Pbb(\Ecal_2)\rightarrow M_H(0,1,-6)$ is indeed a $\Pbb^3$-bundle parameterizing extensions of $\Ecal \in M_H(0,1,-6)$ by $\Ocal_S(-1)$. Moreover, as $\sheafExt^1_S(\Ecal,\Ocal_S(-1))$ is torsion free, any non-split extension
\begin{equation}\label{eq temp}
0 \rightarrow \Ocal_S(-1) \rightarrow \Ical \rightarrow \Ecal \rightarrow 0
\end{equation}
does not admit a local splitting. Hence, the middle term $\Ical$ of such an extension is torsion free \cite[Prop 1.1.10]{HL} and must be the ideal sheaf of a zero-dimensional subscheme \cite[Expl 1.1.16]{HL}. In particular, $\Ical$ is $H$-Gieseker stable and the universal extension on $\Pbb(\Ecal_2) \times S$ defines a map
\[ \psi_2 \colon \Pbb(\Ecal_2) \longrightarrow M_H(1,0,-4) = S^{[5]} \] 
whose image is $W_2$. It is left to show that $\psi_2$ is an isomorphism onto its image. For injectivity, assume that there is $\xi \in S^{[5]}$ such that $\Ical_\xi$ fits into two different extensions of the form \eqref{eq temp}. But then $h^0(S,\Ical_\xi(1)) \geq 2$ which is absurd (cf.\ Lemma \ref{dim lemma}). Finally, we have $\Hom(\Ocal_S(-1),\Ecal) =0 $, which signifies that the extensions of the form \eqref{eq temp} are rigid \cite[Thm 6.4.5]{FGA} and therefore $\psi_2$ is really an immersion of schemes.\\

(ii) In the case of $W_3$, we find
\[ \Ecal_3(x,\Ecal) \rightarrow H^0(S,R\sheafHom(\Ecal,\Ical_x(-1)))\]
and the right hand side is isomorphic to $\Ext^1_S(\Ecal,\Ical_x(-1))$ if $x \neq \Supp(\Ecal)$ and isomorphic to  $\Ext^1_S(\Ecal,\Ical_x(-1)) \oplus \C$ if $x\in \Supp(\Ecal)$. As above, we have $\dim\Ext^1(\Ecal,\Ical_x(-1))= 3$ for all $(x,\Ecal) \in S\times M_H(0,1,-5)$. Hence, $\Ecal_3$ is a vector bundle on the open subset $U \subset S \times M$ that is the inverse image of the complement of the universal curve $\Ccal_{|H|} \subset S \times |H|$ under the product of the support morphism and the identity.
Again, one checks that for $(x,\Ecal) \in U$ the middle term of every non-split extension
\begin{equation}
0 \rightarrow \Ical_x(-1) \rightarrow \Ical \rightarrow \Ecal \rightarrow 0,
\end{equation}
is a pure, $H$-Gieseker stable sheaf with Mukai vector $(1,0,-4)$ and so the associated universal extension defines a map
\[ \psi_3 \colon \Pbb(\Ecal_3|_U) \rightarrow S^{[5]} \]
whose image is clearly contained in $W_3$. We claim, that $\psi_3$ is an isomorphism onto $W_3 \setminus W_2$. Again, $\Hom(\Ical_x(-1), \Ecal) = 0$ and so $\psi_3$ is a local isomorphism. If $\Ical_\xi$ can be written in two different extensions, say over $(x,\Ecal)$ and $(x',\Ecal')$, it follows that  $\xi \setminus \{x,x'\} \subset \Supp(\Ecal)$ as well as $\xi \setminus \{x,x'\} \subset \Supp(\Ecal')$. However, a scheme of length $3$ is at most contained in one curve $C \in |H|$. Hence $\Supp(\Ecal) = \Supp(\Ecal')$, which implies $x=x'$ and also identifies the first arrow up to a scalar. In other words, all the data match and $\psi_3$ is injective. Finally, $\xi \in \im(\psi_3)$ if and only if there is $x \in \xi$ such that $\xi \setminus \{x\}$ is contained in a unique curve $C \in |H|$ but $x \notin C$. Hence, $\im(\psi_3) =W_3\setminus W_2$.
\end{proof}

In Proposition \ref{birational models}, we encounter the flop at $W_2$ and $W_3$, respectively.

\subsection{Brill--Noether loci in \texorpdfstring{$M$}{}}
In this section, we study the Brill--Noether loci in $M$.
Recall that over a smooth curve $D \in |2H|$, the fiber $f^{-1}(D)$ is isomorphic to $\Pic^3(D)$ and therefore
\[ \BN^i(M) \cap f^{-1}(D) = W^i_3(D) \subset \Pic^3(D)\]
is the classical Brill--Noether locus $W^1_3(D)$ \cite{ACGH}. It is known, that a general curve in a primitive linear system on a general K3 surface is Brill--Noether general \cite{Laz}. This implies in particular that $W_d^r$ (when non-empty) has the expected dimension
\[ \dim W^r_d = \rho(g,r,d) = g - (r+1)(g-d+1).\]
However, for non-primitive linear systems unexpected things may happen, as we encounter below.\\

First, we deal with the structure of $\BN^i(M)$ over the locus of smooth curves $B^\circ \subset B$. Our construction uses the fact, that the moduli spaces $M_H(0,2,k)$ for odd $k$ are all isomorphic (cf.\ Lemma \ref{iso mod 2}). Let $\Ccal^\circ \rightarrow B^\circ$ be the corresponding universal curve. For any $k$, we have an isomorphism
\[ M_H(0,2,k-4)^\circ  \cong \Pic^k_{\Ccal^\circ / B^\circ},\]
where $M_H(0,2,k-4)^\circ$ is the preimage of $B^\circ$ under the support map $M_H(0,2,k-4) \rightarrow B$. We define
\[ \BN^i_k(B^\circ) \coloneqq \{ \Lcal \in M_H(0,2,k-4)^\circ\mid h^0(S,\Lcal) \geq i+1 \} \subset M_H(0,2,k-4)^\circ \]
and consider its closure in two particular cases
\[ \begin{array}{lcr}
Z_1 \coloneqq \overline{\BN^0_1(B^\circ)} \subset M_H(0,2,-3) & \text{and} & Z_3 \coloneqq \overline{\BN^0_3(B^\circ)} \subset M = M_H(0,2-1).
\end{array}\]
We expect $Z_3 \subset BN^0(M)$ to be a strict inclusion, as the latter might have components over $\Sigma$ or $\Delta$.

In the following, we will consider $Z_1$ as a subvariety of $M$ via the isomorphism
 \[ M_H(0,2,-3) \xrightarrow\sim M,\ \Ecal \mapsto \sheafExt^1(\Ecal,\Ocal_S)(-1).\]
In particular, over a smooth curve $D \in |2H|$, we have
\[ \Pic^1(D) \rightarrow \Pic^3(D),\ \Lcal \mapsto \Lcal^\vee \otimes \Ocal_S(1)|_D\]
and
\[ Z_1 \cap f^{-1}(D) = \{ \Lcal \in \Pic^3(D) \mid h^0(\Lcal \otimes \Ocal_S(1)|_D) \geq 4\} = \{ \Lcal \in \Pic^3(D) \mid h^1(\Lcal \otimes \Ocal_S(1)|_D) \neq 0\}.\]

\begin{lem}
We have
\[ Z_1 \subset \BN^1(M). \]
In particular, there is an inclusion
\[ Z_1 \subset Z_3. \]
\end{lem}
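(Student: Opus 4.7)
The plan is to reduce to the open stratum $f^{-1}(B^\circ) \subset M$ of fibers over smooth curves and then pass to the closure. This is natural because $Z_1$ is by definition the closure of $\BN^0_1(B^\circ)$, and the condition $h^0(\Ecal) \geq 2$ cuts out the closed subset $\BN^1(M) \subset M$ by upper semicontinuity. So it suffices to show that every $\Lcal \in Z_1 \cap f^{-1}(B^\circ)$ satisfies $h^0(\Lcal) \geq 2$.

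I would start from the explicit description of the fiber $Z_1 \cap f^{-1}(D)$ provided just above the lemma: for a smooth $D \in |2H|$, an element $\Lcal \in Z_1 \cap f^{-1}(D)$ is characterized by the condition $h^0\bigl(D, \Lcal^{\vee} \otimes \Ocal_S(1)|_D\bigr) \geq 1$. The line bundle $\Lcal^{\vee} \otimes \Ocal_S(1)|_D$ has degree $-3 + H \cdot 2H = 1$, so any non-zero global section is an effective divisor of degree one. Thus $\Lcal^{\vee} \otimes \Ocal_S(1)|_D \cong \Ocal_D(p)$ for some point $p \in D$, or equivalently
\[ \Lcal \cong \Ocal_S(1)|_D(-p). \]

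Next, I would compute $h^0(\Lcal)$ directly from the tautological sequence
\[ 0 \longrightarrow \Lcal \longrightarrow \Ocal_S(1)|_D \longrightarrow \Ocal_p \longrightarrow 0. \]
The key input is $h^0(D, \Ocal_S(1)|_D) = 3$. This follows from the exact sequence $0 \to \Ocal_S(-3) \to \Ocal_S(1) \to \Ocal_S(1)|_D \to 0$ together with the vanishing $H^{\leq 1}(S, \Ocal_S(-3)) = 0$ on our K3 surface (Serre duality plus Kodaira vanishing for the ample line bundle $\Ocal_S(3)$), precisely as in the computation of $H^0(S, \Ocal_S(k))$ recalled in Section \ref{section mukai}. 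The long exact sequence then yields $h^0(\Lcal) \geq 3 - 1 = 2$, so $\Lcal \in \BN^1(M)$.

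Combining: $Z_1 \cap f^{-1}(B^\circ) \subset \BN^1(M)$, and taking the closure in $M$ gives $Z_1 \subset \BN^1(M)$, since the target is closed. For the \textquotedblleft in particular\textquotedblright{} inclusion $Z_1 \subset Z_3$, note that $h^0(\Lcal) \geq 2$ trivially implies $h^0(\Lcal) \geq 1$, so $Z_1 \cap f^{-1}(B^\circ) \subset \BN^0_3(B^\circ)$; taking closures in $M$ gives $Z_1 \subset Z_3$ by the definition of $Z_3$. The main (mild) obstacle is really just identifying the fiber-wise structure $\Lcal = \Ocal_S(1)|_D(-p)$ from the given cohomological description; once this is in hand, the whole argument reduces to a single Koszul-style count of sections of $\Ocal_S(1)|_D$.
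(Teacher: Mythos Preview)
Your argument is correct and follows essentially the same strategy as the paper: reduce to a smooth curve $D\in|2H|$, write the line bundle in question as $\Ocal_S(1)|_D(-p)$ for a point $p\in D$, and read off $h^0\geq 2$ from a short exact sequence. The paper packages this slightly differently, working on the $\Pic^1(D)$ side and using the single sequence $0\to\Ocal_S(-1)\to\Ical_p(1)\to\Ocal_D(-p)\otimes\Ocal_S(1)|_D\to 0$ on $S$, whereas you split it into a sequence on $D$ plus the restriction sequence on $S$; the content is the same.

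One slip to fix: since $D\in|2H|$, the restriction sequence is $0\to\Ocal_S(-1)\to\Ocal_S(1)\to\Ocal_S(1)|_D\to 0$, not with $\Ocal_S(-3)$. The required vanishing $H^{\leq 1}(S,\Ocal_S(-1))=0$ still holds, so your conclusion $h^0(\Ocal_S(1)|_D)=3$ is unaffected.
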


In Corollary \ref{Z1 ohne Z3}, we prove that actually $Z_3 \cap \BN^1(M) = Z_1$.

\begin{proof}
It suffices to show the result over a smooth curve $D \in |2H|$. Let $\Lcal \in \Pic^1(D)$ such that $H^0(D,\Lcal) \neq 0$. We want to show that $h^0(D,\Lcal^\vee \otimes \Ocal_S(1)|_D) \geq 2$. Write $\Lcal = \Ocal_D(x)$ for a point $x \in D$. On $S$, we have a short exact sequence
\begin{equation*}
0 \rightarrow \Ocal_S(-1) \rightarrow \Ical_x(1) \rightarrow \Ocal_D(-x) \otimes \Ocal_S(1)|_D \rightarrow 0
\end{equation*}
and the resulting long exact cohomology sequence proves the lemma.
\end{proof}

\begin{prop}\label{Z1} 
\begin{enumerate}[\rm (i)]
\item There is an embedding
\[ \xymatrix{ \Ccal \ar@{^(->}[r] \ar[dr] & M \ar[d]^f \\
& B,
}\]
whose image is $Z_1$. In particular, $\dim Z_1 = 6$ and $Z_1$ is a $\Pbb^4$-bundle over $S$.
\item $Z_3$ is generically isomorphic to a $\Pbb^2$-bundle over $S^{[3]}$. In particular, $\dim Z_3 = 8$.
\end{enumerate}
\end{prop}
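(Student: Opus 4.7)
The plan is to construct explicit families of sheaves parametrized by $\Ccal$ and by (an open subset of) $S^{[3]}$, map them to $M$, and identify the images with $Z_1$ and $Z_3$ respectively.

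For (i), I would start from the short exact sequence
\[
0 \to \Ocal_S(-1) \to \Ical_x(1) \to \Ecal_{D,x} \to 0
\]
already used in the preceding lemma, where for $(D,x) \in \Ccal$ the first map is multiplication by a section $s \in H^0(\Ical_x(2))$ cutting out $D$. Riemann--Roch yields $\chi(\Ecal_{D,x}) = \chi(\Ical_x(1)) - \chi(\Ocal_S(-1)) = 2 - 3 = -1$, so $\Ecal_{D,x}$ has Mukai vector $(0,2,-1)$. For smooth $D$ one identifies $\Ecal_{D,x} \cong \Ocal_D(-x)\otimes \Ocal_S(1)|_D$, a rank-one torsion-free sheaf on an integral curve, hence $H$-Gieseker stable. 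Carrying out this construction in families over $\Ccal$ produces a morphism $\varphi\colon \Ccal \to M$ over the preimage of the smooth locus $B^\circ$, automatically compatible with the projections to $B$ because $D$ is the Fitting support of $\Ecal_{D,x}$; the extension across the preimages of $\Sigma$ and $\Delta$ is handled by the same numerical stability check used in the proof of Proposition \ref{birational map M S5}, specialized to this one-parameter subfamily of extensions.

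I would then identify $\varphi(\Ccal)$ with $Z_1$. The isomorphism $M_H(0,2,-3) \xrightarrow\sim M$, $\Lcal \mapsto \Lcal^\vee \otimes \Ocal_S(1)|_D$, sends $\{\Ocal_D(x):x\in D\}\subset \Pic^1(D)$ to $\{\Ocal_D(-x)\otimes \Ocal_S(1)|_D:x\in D\}$, which coincides with $\varphi(\Ccal)\cap f^{-1}(D)$ for smooth $D$, and taking closures gives $\overline{\varphi(\Ccal)} = Z_1$. Injectivity of $\varphi$ is clear since $D$ is recovered as the Fitting support and the Abel--Jacobi map $D\to \Pic^1(D)$, $x\mapsto \Ocal_D(x)$, is injective; the identification $\Hom(\Ocal_S(-1),\Ical_x(1)) \cong H^0(\Ical_x(2))$ separates tangent directions, so $\varphi$ is in fact a closed immersion. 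Since $\dim\Ccal = 6 = \dim Z_1$, this identifies $\Ccal$ with $Z_1$. The $\Pbb^4$-bundle structure then comes from the other projection $\Ccal \to S$, $(D,x)\mapsto x$, whose fiber over $x$ is $\Pbb(H^0(\Ical_x(2))) \cong \Pbb^4$; globally $\Ccal \cong \Pbb(\mathcal{K}^\vee)$ for $\mathcal{K} = \ker\bigl(H^0(\Ocal_S(2))\otimes \Ocal_S \twoheadrightarrow \Ocal_S(2)\bigr)$.

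For (ii), I would work with the incidence variety
\[
I = \{(\xi, D) \in S^{[3]}\times |2H|:\xi\subset D\}.
\]
Let $U \subset S^{[3]}$ be the open locus where $\xi$ imposes three independent linear conditions on $|2H|$. Over $U$, the first projection realizes $I$ as the projectivization $\Pbb(\Fcal)$ of the rank-three vector bundle $\Fcal = p_{1*}(\Ical_{\Zcal}\otimes p_2^*\Ocal_S(2))$, where $\Zcal\subset S^{[3]}\times S$ is the universal subscheme. A relative Abel--Jacobi map sends $(\xi,D) \mapsto \Ocal_D(\xi)$, which has Mukai vector $(0,2,-1)$ and nontrivial global sections; restricting to the open subset $V \subset \Pbb(\Fcal)$ where $D$ is smooth and integral ensures Gieseker stability, yielding a morphism $\Phi \colon V \to M$ landing in $Z_3$. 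To conclude, I would invert $\Phi$ on an open subset of $Z_3$: for the generic sextic, a general curve $D\in B^\circ$ is Brill--Noether general of genus five, so $\dim W^0_3(D) = 3$ with $\Sym^3(D) \to W^0_3(D)$ birational, and the general $\Ecal\in Z_3$ satisfies $h^0(\Ecal) = 1$; its unique section gives an exact sequence $0 \to \Ocal_D \to \Ecal \to \Ocal_\xi \to 0$ from which $(\xi, D) \in I$ is recovered, providing an inverse to $\Phi$ on a suitable open set. Combined with the $\Pbb^2$-bundle structure of $\Pbb(\Fcal)\to U$, this exhibits an open subset of $Z_3$ as a $\Pbb^2$-bundle over an open subset of $S^{[3]}$ and yields $\dim Z_3 = 6+2 = 8$. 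The main technical obstacle throughout is keeping track of Gieseker stability and well-definedness of the various maps on the boundary loci over $\Sigma$ and $\Delta$; the cleanest path is to establish the bundle structures first over $B^\circ$ and then pass to closures via dimension.
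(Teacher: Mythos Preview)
Your overall strategy matches the paper's: realize $Z_1$ and $Z_3$ as images of Abel--Jacobi type maps from the incidence varieties $\Ccal$ and $I = \Hilb^3(\Ccal/B)$, constructed via universal quotients of ideal sheaves. For part (ii) your proposal is in fact more self-contained than the paper's, which sets up the map $\Phi$ but defers the actual identification of the $\Pbb^2$-bundle structure to the wall-crossing computation in Theorem~\ref{birational models}; your idea of inverting $\Phi$ via the unique section is a clean shortcut.

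There are, however, two genuine issues. First, in (i) your justification that $\varphi$ is a closed immersion is incomplete. The phrase ``the identification $\Hom(\Ocal_S(-1),\Ical_x(1)) \cong H^0(\Ical_x(2))$ separates tangent directions'' does not by itself establish injectivity of the differential $T_{(D,x)}\Ccal \to \Ext^1(\Ecal_{D,x},\Ecal_{D,x})$. The rigidity criterion for families of extensions used in Proposition~\ref{W is jumping locus} requires $\Hom(\text{sub},\text{quot})=0$; here that would mean $\Hom(\Ocal_S(-1),\Ecal_{D,x})=H^0(\Ecal_{D,x}(1))=0$, but $\Ecal_{D,x}(1)$ has degree $7$ on a genus-$5$ curve and plenty of sections, so that criterion fails. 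The paper fills this gap with an explicit first-order argument: given a $\C[\epsilon]$-point of $\Ccal$ whose image in $M$ is the trivial deformation $p^*\Ecal$, one chases the induced diagram on $S[\epsilon]$ (using that $\Ocal_S(-2)$ is rigid and that the Fitting support of $p^*\Ecal$ is $D[\epsilon]$) to conclude that the deformation of $(D,x)$ was already trivial.

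Second, in (ii) you appeal to Brill--Noether generality of a general $D\in|2H|$, but every smooth curve in $|2H|$ is hyperelliptic (it double-covers a plane conic), hence \emph{not} Brill--Noether general; indeed Lazarsfeld's theorem cited earlier applies only to primitive linear systems. The specific facts you need---that $\dim W^0_3(D)=3$ and that $\Sym^3 D\to W^0_3(D)$ is birational---are still true, since $\dim W^1_3(D)\le 1$ (Martens) forces the general effective degree-$3$ line bundle to have $h^0=1$. But as the paper observes in Remark~\ref{remark bn}, these facts are in fact \emph{outputs} of the proposition rather than inputs, so you should either argue them directly or restructure the logic.
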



\begin{proof}
The proof of (i) and (ii) works analogously. The idea is that, over $D \in B^\circ$ we want to parameterize the line bundles $\Ocal_D(-\xi_i), i=1,3$ for a point $\xi_1 \in D$ and a divisor $\xi_3 \subset D$ of degree 3, respectively. These are the ideal schemes of $\xi_i \subset D$ and can be realized as the quotient of the respective ideal sheaves in $S$
\[ 0 \rightarrow \Ical_{D / S} = \Ocal_S(-2) \rightarrow \Ical_{\xi_i / S} \rightarrow  \Ical_{\xi_i / D} = \Ocal_D(-\xi_i) \rightarrow 0. \]
Hence, our task is to parameterize $\xi_i \subset D$ for $\xi_i \in S^{[i]}$ and $D \in B$, define the above sequence universally and show that the cokernel defines a map to $M_H(0,2,-i-4)$ for $i=1,3$.

The first step is straightforward. For $i=1,3$ define
\[ \Xcal_i  \coloneqq \Pbb({p_2}_*(\Ical_{\mathcal{Z}_i}\otimes p_1^*\Ocal_S(2))) \rightarrow S^{[i]},\]
where ${\mathcal{Z}_i} \subset S\times S^ {[i]}$ is the universal subscheme and $p_i$ are the projections from $S \times S^{[i]}$ for $i=1,2$. The inclusion
\[ {p_2}_*(\Ical_{\mathcal{Z}_i}\otimes p_1^*\Ocal_S(2)) \hookrightarrow {p_2}_*(\Ocal_{S \times S^{[i]}}\otimes p_1^*\Ocal_S(2)) \cong H^0(S,\Ocal_S(2))\otimes \Ocal_{S^{[i]}}\]
defines an embedding $\Xcal_i \subset S^{[i]} \times B$. We could also think of $\Xcal_i$ as $\Hilb^i(\Ccal/B)$, i.e.
\[ \begin{array}{lcr}
\Xcal_1 = \Ccal = \{ p \in D \} \subset S \times B& \text{and} &\Xcal_3 = \{ \xi \subset D\} \subset S^{[3]} \times B.
\end{array}\]
Note that $\Xcal_1$ is a $\Pbb^4$-bundle over $S$ and $\Xcal_3$ is generically a $\Pbb^2$-bundle over $S^{[3]}$. On $S \times \Xcal_i$, we have the sequence
\[ 0 \rightarrow (\id \times p_B)^*\Ocal_{S\times B}(-\Ccal) \rightarrow (p_S\times \id)^*\Ical_{\mathcal{Z}_i} \rightarrow \Qcal_i \rightarrow 0. \]
Here, $\Qcal_i$ is defined to be the cokernel, which is flat over $\Xcal_i$ and $v(\Qcal_i|_{S \times \{p\}} ) = (0,2,-i-4)$ for all $p \in \Xcal_i$. Consequently, $\Qcal_i$ gives a map
\[ \xymatrix{ \Xcal_i \ar[dr] \ar@{-->}[rr]^-{\varphi_i} && M_H(0,2,-i-4), \ar[dl]& p \mapsto \Qcal_i|_{S \times \{p\}} \\
& B } \]
which is defined in $p\in \Xcal_i$, whenever $\Qcal_i|_{S \times \{p\}}$ is stable. By definition, we have $\im(\varphi_i) \subset Z_i$.\\
For simplicity, we restrict to the case $i=1$ in the rest of the proof. Actually, the more powerful methods from Proposition \ref{birational models} allow us to conclude without explicit computation, that $Z_3\setminus Z_1$ is a projective bundle.\\
We claim that $\varphi_1$ is everywhere defined and immersive. Clearly, $\varphi_1$ is defined and injective over $B\setminus \Sigma$ and
with the same arguments as in the proof of Proposition \ref{birational map M S5} this also holds true over $\Delta$. So $\im(\varphi_1)=Z_1$ and 
it is left to show that $\varphi_1$ is an immersion. We show that the induced map on tangent spaces is injective. To this end, let $S[\epsilon] \coloneqq S \times_\C \C[\epsilon] \xrightarrow p S$ and assume we are given a $\C[\epsilon]$-valued point of $\Ccal$ that maps to a trivial deformation of $\Ecal \in M_H(0,2,-5)$, i.e.\ this point corresponds to a sequence
\begin{equation}\label{defo}
p^*\Ocal_S(-2) \rightarrow \Scal \rightarrow p^*\Ecal \rightarrow 0
\end{equation}
on $S[\epsilon]$. Here, we use that the line bundle $\Ocal_S(-2)$ is rigid. We want to see that $\Scal = p^*\Ical_x$, where $\Ecal$, as before, sits in the sequence $0 \rightarrow \Ocal_S(-2) \rightarrow \Ical_x \rightarrow \Ecal \rightarrow 0$. By definition, \eqref{defo} embeds into a diagram
\[
\xymatrix{
p^*\Ocal_S(-2) \ar[r]\ar@{=}[d]& \Scal \ar[r]\ar[d] & p^*\Ecal \ar[r]\ar[d] & 0 \\
p^*\Ocal_S(-2) \ar[r]          & \Ocal_{S[\epsilon]} \ar[d]\ar[r] & \Ocal_{\tilde{D}}\ar[d] \ar[r]& 0\\
& \Ocal_{\tilde{x}} \ar@{=}[r] \ar[d] & \Ocal_{\tilde{x}} \ar[d] \\
& 0 & 0,
}
 \]
where $\tilde{x} \subset \tilde{D} \subset S[\epsilon]$ are deformations of $x$ and $D$. As $\Supp(p^*\Ecal)= D[\epsilon]$, we must have $p^*\Ecal = \Ocal_{D[\epsilon]}(-x[\epsilon])$ and we can conclude that all deformations are trivial. Hence, $\Scal = p^*\Ical_x$.
\end{proof}

Note that the smooth curves in $|2H|$ are hyperelliptic and so there is a unique line bundle $g^1_2(D) \in \Pic^2(D)$ such that $h^0(g^1_2)=2$.

\begin{cor}
Let $D \in |2H|$ is a smooth curve and $\Lcal \in f^{-1}(D)$. Then
\[ \Lcal \in  Z_1 \cap f^{-1}(D) \iff  \Lcal \cong \Ocal_D(x) \otimes g^1_2\ \text{for some}\ x \in D.\]
\end{cor}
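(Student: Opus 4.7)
The plan is to unwind the definitions carefully, using the hyperelliptic structure of $D$ inherited from the double cover $\pi \colon S \to \Pbb^2$.

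First, I would use the explicit description of $Z_1$ over the smooth locus. By definition, $\Lcal \in Z_1 \cap f^{-1}(D)$ if and only if $\Lcal \cong \Mcal^\vee \otimes \Ocal_S(1)|_D$ for some $\Mcal \in \Pic^1(D)$ with $h^0(D,\Mcal) \neq 0$. Since $\deg \Mcal = 1$, any nonzero global section vanishes on a unique effective divisor of degree $1$, so $\Mcal \cong \Ocal_D(x)$ for a unique $x \in D$. Thus
\[ Z_1 \cap f^{-1}(D) = \{ \Ocal_D(-x) \otimes \Ocal_S(1)|_D \mid x \in D\}. \]

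Next, I would identify the restriction $\Ocal_S(1)|_D$ with $(g^1_2)^{\otimes 2}$. The double cover $\pi \colon S \to \Pbb^2$ restricts to a double cover $\pi|_D \colon D \to C$, where $C \subset \Pbb^2$ is the smooth conic cut out by $D \in |2H|$. Under the isomorphism $C \cong \Pbb^1$, the line bundle $g^1_2$ is $(\pi|_D)^*\Ocal_{\Pbb^1}(1)$, while $\Ocal_S(1)|_D = (\pi|_D)^*\Ocal_C(1) = (\pi|_D)^*\Ocal_{\Pbb^1}(2)$, so indeed $\Ocal_S(1)|_D \cong (g^1_2)^{\otimes 2}$.

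Finally, the hyperelliptic involution $\sigma = \iota|_D$ gives $g^1_2 \cong \Ocal_D(x + \sigma(x))$ for every $x \in D$. Substituting,
\[ \Ocal_D(-x) \otimes \Ocal_S(1)|_D \cong \Ocal_D(-x) \otimes \Ocal_D(x+\sigma(x)) \otimes g^1_2 \cong \Ocal_D(\sigma(x)) \otimes g^1_2, \]
and as $x$ runs through $D$ so does $y \coloneqq \sigma(x)$, which yields both inclusions of the claimed equality.

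There is no serious obstacle here; the only step that requires a moment's thought is the identification $\Ocal_S(1)|_D \cong (g^1_2)^{\otimes 2}$, which is forced by the double cover structure $\pi \colon S \to \Pbb^2$ and the fact that $D$ is the preimage of a conic.
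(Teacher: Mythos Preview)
Your proof is correct and rests on the same key identity as the paper's, namely $\Ocal_S(1)|_D \cong (g^1_2)^{\otimes 2}$. The paper proves only the inclusion $\{\Ocal_D(x)\otimes g^1_2 \mid x\in D\} \subseteq Z_1 \cap f^{-1}(D)$ via the cohomological characterization $h^1(\Lcal \otimes \Ocal_S(1)|_D)\neq 0$ together with Serre duality, and then concludes by a dimension count (both sides being one-dimensional by Proposition~\ref{Z1}); you instead obtain both inclusions at once from the direct line-bundle identity $\Ocal_D(-x)\otimes (g^1_2)^{\otimes 2} \cong \Ocal_D(\sigma(x))\otimes g^1_2$, which is a slightly cleaner route that avoids the dimension argument.
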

\begin{proof}
By dimension reasons, it suffices to show one implication. Assume $\Lcal \cong \Ocal_D(x) \otimes g^1_2$ for some $x \in D$. We know that $\Ocal_S(1)|_D \cong (g^1_2)^{\otimes 2}$ \cite[Prop 5.3]{Hart}. Thus
\[h^1(\Lcal \otimes \Ocal_S(1)|_D) = h^1(\Ocal_D(x) \otimes (g^1_2)^{\otimes 3}) = h^0(\Ocal_D(-x) \otimes g^1_2) \neq 0,\]
which proves the claim.
\end{proof}

\begin{rem}\label{remark bn}
Another consequence of Proposition \ref{Z1} is that $W^0_3(D)$ has the expected dimension
 \[\dim W^0_3(D)  = \rho(5,0,3) =3 \]
 and that
\[\dim W^1_3(D) \geq 1\]
for $D \in |2H|$ general, even though the Brill--Noether number $\rho(5,1,3) = 5-2(5-3+1)$ is negative. The latter also follows, since $g^1_2(D) \otimes \Ocal_D(x) \in W^1_3(D)$ for all $x \in D$. In Corollary \ref{Z1 ohne Z3}, we will see that every line bundle in $W^1_3(D)$ is of this form, i.e.\ $\dim W^1_3(D) =1$.  Finally, we know
\[W^2_3(D) = \emptyset\]
from Clifford's theorem \cite[IV Thm 5.4]{Hart}.
\end{rem}

\section{Computation of the birational models}\label{section models}
In this final section, we leave our hands-on methods behind and apply the techniques of Bayer and Macr\`i to get a full picture of the birational models of $S^{[5]}$ and the associated birational wall-crossing transformations. We find that there are five birational models of $M$ (including $M$ and $S^{[5]}$) and moreover match the exceptional loci of the flopping contractions with the subvarieties from the previous section. By a birational model of $M$, we mean a smooth projective variety with trivial canonical bundle that is birational to $M$.

\subsection{Numerical characterization of the walls in \texorpdfstring{$\Mov(M)$}{}} \label{section num walls}
We will compute the wall and chamber decomposition of the movable cone of $S^{[5]}$ (resp.\ $M$), whose chambers correspond to the birational models of $S^{[5]}$ (resp.\ $M$) using Bayer and Macr\`i's results \cite{MMP}. We start by recalling the basic definitions and relevant statements in this context.\\

Let $X$ be an irreducible holomorphic symplectic manifold. Recall that the \emph{positive cone} $\Pos(X) \subset \NS(X)_\R$ is the connected component of $\{x \in \NS(X)_\R \mid (x,x) > 0 \}$ containing a K\"ahler class. The \emph{movable cone} $\Mov(X) \subset \NS(X)_\R$ is the open cone generated by the classes of divisors $D$ such that $|D|$ has no divisorial base locus.  We have the inclusions
\[ \overline{\Amp(X)}= \Nef(X) \subset \overline{\Mov(X)} \subset \overline{\Pos(X)} \subset \NS(X)_\R. \]
The movable cone admits a locally polyhedral chamber decomposition, whose chambers correspond to smooth birational models of $X$. More precisely,
\[ \overline{\Mov(X)} = \overline{\bigcup_g g^*\Nef(X')}, \]
where the union is taken over all birational maps $g\colon X \dashrightarrow X'$ from $X$ to another irreducible holomorphic symplectic manifold $X'$ \cite[Thm 7, Cor 19]{HT}.\\

Assume that $X=M_\sigma(v)$ is a smooth projective moduli space of $\sigma$-stable objects in $D^b(S)$ with $v^2>0$. In this case, the Mukai morphism \cite[Thm 3.6]{MMP} gives the identification
\[ \lambda_X \colon v^\perp \xrightarrow\sim \NS(X).\]
Here, $v^\perp \subset H^*_{\rm alg}(S,\Z)$. By \cite[Thm 12.1]{MMP}, the nef (resp. movable) cone of $X$ is one of the chambers of the decomposition of the positive cone $\overline{\Pos(X)}$ whose walls are the orthogonal complement to linear subspaces
\[  \lambda_{X}(\mathcal{H}^\perp),\]
where $\mathcal{H} \subset H^*_{\rm alg}(S,\Z)$ is a primitive sublattice of signature $(1,1)$ that contains $v$. If $\mathcal{H}= \langle v,a \rangle$, then $a \in H^*_{\rm alg}(S,\Z)$ can be chosen such that $a^2 \geq -2$ and $0 \leq (v,a) \leq \tfrac{v^2}{2}$ (resp.\ ($a^2=-2$ and $(v,a)=0$) or ($a^2=0$ and $(a,v) \in \{1,2\})).$
Following \cite[Thm 5.7]{MMP}, the lattice $\mathcal{H}$ governs the geometry of the birational transformation at the respective wall. One can distinguish the following cases:
\begin{enumerate}[\rm (a)]
\item The lattice $\mathcal{H}$ is isomorphic to one of the following:
$\left(\begin{smallmatrix}-2 & 0 \\ 0& v^2\end{smallmatrix}\right), \left(\begin{smallmatrix}0 & 1 \\ 1 & v^2\end{smallmatrix}\right), \left(\begin{smallmatrix}0 & 2 \\ 2& v^2\end{smallmatrix}\right) $. This case corresponds to a divisorial contraction.
\item The lattice $\mathcal{H}$ is none of the above and there is either $s \in \mathcal{H}$ such that $s^2=-2$ and $0 < (s,v) \leq \tfrac{v^2}{2}$ or $v$ is the sum $v = a_1 + a_2$ of two positive classes $a_i \in \mathcal{H}$ (i.e.\ $a_i^2 \geq 0$ and $(a_i,v)> 0$ for $i=1,2$). 
This case corresponds to a flopping contraction.
\item  In all other cases, the birational transformation is actually an isomorphism. 
\end{enumerate}

The rough idea here is, that a wall of the ample cone is induced by a wall-crossing in the space of stability conditions and the associated contraction contracts precisely the curves of objects that are S-equivalent with respect to the stability condition on the wall \cite[Thm 1.4(a)]{BM}. To a wall $\mathcal{W}$ (with respect to $v$) of the stability manifold, Bayer and Macr\`i associate a rank two sublattice \cite[Prop 5.1]{MMP}
\[ \mathcal{H} \coloneqq \{ w \in H^*_{\rm alg}(S,\Z) \mid \phi_0(w) = \phi_0(v)\ \text{for all}\ \sigma_0 \in \Wcal\} \subset H^*_{\rm alg}(S,\Z).\]
Here, $\phi_0$ is the phase associated to $\sigma_0=(Z_0,\mathcal{A}_0)$. Then $\Hcal$ has the property that if $\Ecal$ is a $\sigma$-stable object and $A_i$ is a factor in its Harder--Narasimhan filtration with respect to a stability condition $\sigma_-$, which lies sufficiently close on the other side of the wall, then $v(A_i) \in \Hcal$. Now, let $\Ecal_1$ and $\Ecal_2 \in M_\sigma(v)$ have the same Harder--Narasimhan factors with respect to $\sigma_-$. As one can always find a Jordan--H\"older filtration that is a refinement of the Harder-Narasimhan filtration, this implies that $\Ecal_1$ and $\Ecal_2$ are $S$-equivalent and therefore contracted under the transformation induced from the wall-crossing. Consequently, in order to understand this transformation, one has to parameterize possible Harder--Narasimhan filtrations whose factors have Mukai vectors in $\Hcal$. Unfortunately, it may also happen that the open subset $M_{\sigma_0}^{\rm st}(v) \subset M_{\sigma_0}(v)$ of stable objects is empty. In this case, the behavior at the wall is harder to control and we call $\Wcal$ a \emph{totally semistable wall}. By \cite[Thm 5.7]{MMP}, $\mathcal{W}$ is totally semistable if and only if
\begin{enumerate}[\rm (a')]
\item there is $w \in \Hcal$ such that $w^2=0$ and $(v,w)=1$ or
\item there is $s \in \Hcal$ such that $s^2=-2, M_{\sigma_0}(s)\neq \emptyset$ and $(s,v)<0$.
\end{enumerate}
The content of Bayer and Macr\`i's article \cite{MMP} is a detailed study of the possible lattices and the associated modifications of the moduli space, which in particular, yields the above lists.

For our computations, we fix the following notation:
We set \[ \begin{array}{lcr}
v=(0,2,-1) & \text{and} & v'= (1,0,-4).
\end{array} \]
The respective Mukai morphisms
fit in the commutative diagram
\begin{equation*}
\xymatrix{
H^*_{\rm alg}(S,\Z)\ar[d]^{T^*}_\cong  & v^\perp\ar[d]_\cong \ar@{_(->}[l] \ar[r]^-{\lambda_M} 	& \NS(M) \ar[d]^{T^*}_\cong\\
H^*_{\rm alg}(S,\Z)  				& v'^\perp \ar@{_(->}[l]\ar[r]^-{\lambda_{S^{[5]}}} & \NS(S^{[5]}),
}
\end{equation*}
where, by abuse of notation, we also write $T^*\colon H^\ast_{\rm alg}(S,\Z) \rightarrow H^\ast_{\rm alg}(S,\Z)$ for the isomorphism that makes the left square commute. It is defined as the composition
\[ H^\ast_{\rm alg}(S,\Z) \xrightarrow{\cdot \ch(\Ocal_S(-2))} H^\ast_{\rm alg}(S,\Z) \xrightarrow{\rho_{v(\Ocal_S(-2))}} H^\ast_{\rm alg}(S,\Z),\]
where $\rho_{v(\Ocal_S(-2))}$ is the reflection at the hyperplane orthogonal to $v(\Ocal_S(-2))= (1,-2,5)$. In our usual basis, $T^*$ is given by the matrix
\begin{equation}\label{matrix T}
\begin{pmatrix}
-4 & -4 & -1 \\ 10 & 9 & 2 \\ -25 & -20 & -4
\end{pmatrix} \circ
\begin{pmatrix}
1 & 0 & 0\\ -2 & 1 & 0 \\ 4 & -4& 1
\end{pmatrix} =
\begin{pmatrix}
0& 0 & -1\\ 0 & 1 & 2 \\ -1 & -4& -4
\end{pmatrix}.
\end{equation}

We have the following 
basis of $\NS(S^{[5]})$ 
\[
\begin{array}{lcr}
\delta \coloneqq \lambda_{S^{[5]}}(-1,0,-4) & \text{and} & H \coloneqq \lambda_{S^{[5]}}(0,-1,0).
\end{array}
\]

For the Hilbert scheme $S^{[n]}$, computing the walls in $\overline{\Pos(S^{[n]})}$ reduces to solving Pell's equation, cf. \cite[Prop. 13.1]{MMP} and also \cite[Lem 2.5]{Cat}. In our case, we get the following list of walls  with respective intersection properties:
\[ \begin{array}{c|c|c|c|c|c|c|c}
i & a'_i \in H^\ast_{\rm alg}(S,\Z) & (a'_i,a'_i) & (a'_i,v') & D_i  \in H^2(S^{[5]},\Z) & (D_i,D_i) & R_i \in H_2(S^{[5]},\Z) & (R_i,R_i) \\
\hline
0 & (0,0,1)  & 0& 1 &  -\delta      & -8   &  -\delta^\vee    & -\tfrac{1}{8}\\
1 & (1,-1,2) & -2& 2 & 4H-3\delta   & -40  & H-6\delta^\vee  & -\tfrac{5}{2}\\
2 & (1,-1,1) &  0& 3 & 8H-5\delta   & -72  & H-5\delta^\vee  & -\tfrac{9}{8}\\
3 & (-1,2,-5)& -2& 1 & -16H+9\delta  & -136 & -2H+9\delta^\vee & -\tfrac{17}{8}\\
4 & (2,-3,5) & -2& 3 & 24H-13\delta & -200 & 3H-13\delta^\vee & -\tfrac{25}{8}\\
5 & (-1,2,-4)&  0& 0 & -2H+\delta    & 0    & -H+4\delta^\vee   & 0.
\end{array}
\]
Here, $\pm D_i \in \NS(S^{[5]})$ is the integral class defining the same wall as $a'_i$, i.e.\
\[ D_i^\perp = \lambda_{S^{[5]}}({a'}_i^\perp \cap {v'}^\perp)\]
or, in other words, $D_i$ is a rational multiple of the orthogonal projection of $a'_i$ to $({v'}^\perp)_\Q$. The sign here is chosen such that this multiple is positive, but it does not matter. In \cite{Mo}, the classes $\pm D_i$ are called wall divisors. Moreover, $R_i \in H_2(S^{[5]},\Z)$ is the curve class corresponding to $D_i \in H^2(S^{[5]},\Z)$, i.e.\ $D_i$ is the smallest positive multiple of $R_i$ contained in $H^2(S^{[5]},\Z)$ under the embedding $H^2(S^{[5]},\Z) \subset H_2(S^{[5]},\Z)$ coming from the intersection form. Below, we also give the list of walls in coordinates of $M$.

\begin{cor}
For a K3 surface $S$ with $\Pic(S)=\Z\cdot H$ and $H^2=2$, there are five smooth birational models of $S^{[5]}$ (including $S^{[5]}$ itself).
\end{cor}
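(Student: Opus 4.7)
The plan is to read off the corollary from the table of walls in Section~\ref{section num walls} combined with the Bayer--Macr\`i trichotomy (a)/(b)/(c) recalled just before. The strategy has three steps: verify completeness of the wall list, classify each wall as divisorial, flopping, or fake, and count the resulting chambers in $\overline{\Mov(S^{[5]})}$.

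First, I would verify that the six rows of the table exhaust the walls in $\overline{\Pos(S^{[5]})}$. Every such wall corresponds to a primitive rank-two sublattice $\mathcal{H}=\langle v',a'\rangle\subset H^\ast_{\rm alg}(S,\Z)$ containing $v'=(1,0,-4)$, with $a'$ normalized so that $(a')^2\geq -2$ and $0\leq (v',a')\leq (v')^2/2=4$. Since $\Pic(S)=\Z\cdot H$, the algebraic lattice is isomorphic to $\Z^{3}$ with Gram form $\operatorname{diag}(-2,2,-2)$ in the Mukai basis; for each of the finitely many admissible pairs $((a')^2,(v',a'))$ the residual freedom reduces to a Pell equation of the form $x^2-2y^2=c$, as in \cite[Prop.~13.1]{MMP} and \cite[Lem.~2.5]{Cat}. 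Carrying this enumeration out recovers precisely the rows $i=0,\dots,5$.

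Second, I would classify each wall via \cite[Thm~5.7]{MMP}. The wall $i=0$ has Gram matrix $\bigl(\begin{smallmatrix}0 & 1 \\ 1 & 8\end{smallmatrix}\bigr)$ and lies in case~(a); it gives the Hilbert--Chow divisorial contraction and thus bounds $\overline{\Mov(S^{[5]})}$ on the $S^{[5]}$ side. The wall $i=5$ has $(a'_5)^2=0$ and $(a'_5,v')=0$, so $D_5=-2H+\delta$ is isotropic and lies on $\partial\overline{\Pos(S^{[5]})}$; it corresponds to the Lagrangian fibration (namely the Mukai system, after transport via $T$) and therefore bounds $\overline{\Mov(S^{[5]})}$ on the opposite side. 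For $i=1,3,4$ the class $a'_i$ itself satisfies $(a'_i)^2=-2$ with $0<(a'_i,v')\leq 4$, placing these walls in case~(b); for $i=2$ one exhibits the decomposition $v'=(1,-1,1)+(0,1,-5)$ as a sum of two isotropic classes in $\mathcal{H}_2$, each positive with respect to $v'$, which again realises case~(b). A short determinant check (the Gram determinants for $\mathcal{H}_{1},\dots,\mathcal{H}_{4}$ are $-20,-9,-17,-25$) rules out any of these four lattices being of the divisorial form in~(a).

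Finally, the four walls in case~(b) are genuine interior flopping walls, and inspecting the curve classes $R_i$ in the table shows that their rays in $\NS(S^{[5]})_{\R}\cong \R^{2}$ are pairwise distinct. Hence $\overline{\Mov(S^{[5]})}$ is bounded by the two extremal rays coming from $i=0$ and $i=5$ and cut by four interior walls into exactly five chambers. By \cite[Cor~19]{HT} together with \cite[Thm~1.4]{BM}, each chamber equals $g^\ast\Nef(X')$ for some smooth irreducible holomorphic symplectic manifold $X'$ birational to $S^{[5]}$, producing the five smooth birational models. The main obstacle lies in Step~1: one has to be certain that the Pell-type enumeration misses no admissible $a'$. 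Once the table is certified complete, the classification in Step~2 and the counting in Step~3 are immediate.
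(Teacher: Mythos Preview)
Your approach is correct and is exactly the paper's (implicit) argument: the corollary is stated immediately after the table of walls without further proof, so the paper treats it as a direct consequence of the enumeration together with the Bayer--Macr\`i trichotomy, precisely as you spell out. Two minor slips to correct, neither of which affects the logic: the algebraic Mukai lattice $H^*_{\rm alg}(S,\Z)$ is isometric to $U \oplus \langle 2\rangle$ (signature $(2,1)$, discriminant $-2$), not to $\operatorname{diag}(-2,2,-2)$, which has the wrong signature; and in your treatment of the wall $i=2$, the class $(0,1,-5)$ has square $2$, not $0$, so the decomposition $v'=(1,-1,1)+(0,1,-5)$ is into two \emph{positive} classes (one isotropic, one of positive square) rather than two isotropic ones---this still places the wall in case~(b).
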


\subsection{Geometrical characterization of the walls in \texorpdfstring{$\Mov(M)$}{}}
So far, we know that the movable cone of $M$ or $S^{[5]}$, respectively, is divided into five chambers. The outer ones correspond to $M$, with the Lagrangian fibration and to $S^{[5]}$ with the Hilbert--Chow morphism. Next, we want to understand exceptional loci (and their strict transforms in $M$ and $S^{[5]}$, respectively) of the birational transformations between two models in adjacent chambers.\\

The geometry of the occuring contractions is studied in-depth in \cite[\S 9]{MMP}, to which we refer for the precise results. As mentioned above, the rough idea is to parameterize objects with prescribed Harder-Narasimhan filtration with respect to a stability condition on the other side of the wall. This translates into finding decompositions $v = a_1 + \ldots + a_m$ into effective classes $a_i \in \Hcal$, where $\Hcal  \subset H^*_{\rm alg}(S,\Z)$ is the sublattice such that $\lambda_{M_\sigma(v)}(\mathcal{H}^\perp)$ cuts out the wall of the ample cone. Here, a class $a \in \Hcal$ is called \emph{effective} if $M_{\sigma_0}(a) \neq \emptyset$ \cite[Prop 5.5]{MMP}, and a class $a \in H^*_{\rm alg}(S,\Z)$ is called \emph{positive}, if $a^2\geq0$ and $(a,v)> 0$. All positive classes are effective. \\

Let $\mathcal{H}$ define a flopping wall for $M_\sigma(v)$. By \cite[Prop 9.1]{MMP} there are two cases: Either
\begin{enumerate}[\rm (i)]
\item there is a decomposition $v=a + b$ into two positive classes and $\Hcal = \langle v,a \rangle$. Or
\item there is a spherical class $s \in \Hcal$ such that $0 < (s,v) \leq \tfrac{v^2}{2}$.
\end{enumerate}
In case (i), assume moreover that the wall is not totally semistable with respect to $a$ or $b$ (e.g.\ if $\Hcal$ does not contain any spherical or isotropic classes), and that $\phi_\sigma(a) < \phi_\sigma(b)$, where $\phi_\sigma$ is the phase of the stability condition $\sigma$. By \cite[\S 9]{MMP}, the decomposition $v=a+b$ defines an irreducible component $E$ of the exceptional locus of the contraction morphism associated to the wall defined by $\mathcal{H}$, such that a generic point $\Ecal \in E$ is an extension
\[ \Acal \rightarrow \Ecal \rightarrow \Bcal \xrightarrow{[1]}, \]
where $\Acal$ and $\Bcal$ are $\sigma$-stable objects of Mukai vector $a$ and $b$, respectively. By assumption on the wall, $\Acal$ and $\Bcal$ are generically also $\sigma_0$-stable, where $\sigma_0$ is a generic stability condition on the wall. And by definition of $\Hcal$, we have $\phi_{\sigma_0}(a) = \phi_{\sigma_0}(b)$. Hence, $\Hom(\Bcal,\Acal)=0= \Hom(\Acal,\Bcal)$. Finally, $\Ecal$ defines a class in $\Pbb^1(\Ext^1_{D^b(S)}(\Bcal, \Acal))$ and we find 
\[r \coloneqq \dim \Pbb^1(\Ext^1_{D^b(S)}(\Bcal,\Acal)) = (v-a,a)-1.\]
Thus, $E$ has generically the structure of a $\Pbb^r$-bundle over $M_\sigma(a)\times M_\sigma(b)$. By assumption, $M_\sigma(a)$ and $M_\sigma(b)$ are both non-empty and
\begin{align*}
\codim E &= (v,v)+2 - \dim M_\sigma(a) - \dim M_\sigma(b) -\dim \Pbb^1(\Ext^1_{D^b(S)}(\Bcal,\Acal)) \\
		&= (v,v) +2-((a,a)+2)- ((v-a,v-a)+2) - ((v-a,a) -1)\\
		&= (v-a,a)-1 =r.
\end{align*} 
It may happen that case (ii) is a special case of (i). Otherwise $\mathcal{H}= \langle v,s\rangle$ and the above results also hold for the decomposition  $v = s + (v-s)$ if $s$ is effective and need extra care, if $s$ is not effective \cite[Proof of Prop 9.1]{MMP}.\\


The relevant data for our example is listed in the following table:
\[ \begin{array}{c|c|c|c|c|c}
i & a'_i \in H^\ast_{\rm alg}(S,\Z) & a_i \in H^\ast_{\rm alg}(S,\Z) & (a_i,a_i) & (a_i,v) & r_i  \\
\hline
0 & (0,0,1)  & (-1,0,0)  & 0& 1 & \\
1 & (1,-1,2) & (-2,1,-1) &-2& 2 & 3\\
2 & (1,-1,1) & (-1,1,-1) & 0& 3 &2\\
3 & (-1,2,-5)&  (1,0,1)  &-2& 1 & 2\\
4 & (2,-3,5) & (-1,1,-2) &-2& 3 & 4\\
5 & (-1,2,-4)& (0,0,1)   & 0& 0. &
\end{array}
\]

Here, $T^*a_i = a'_i$ (cf.\ \eqref{matrix T}), so that 
\[ \lambda_M(a_i^\perp \cap v^\perp) =  \lambda_{S^{[5]}}({a'_i}^\perp \cap {v'}^\perp)\]
cuts out the walls in $\overline{\Pos(M)} \cong \overline{\Pos(S^{[5]})}$.

When following the birational transformations from $S^{[5]}$ to $M$, not all exceptional loci can naively be identified in $S^{[5]}$ as their codimension $r_i = (v-a_i,a_i) -1$ is not strictly decreasing. However, if we approach from either side, we get a beautiful description.

\begin{theo}\label{birational models}
The birational models of $S^{[5]}$ are connected by the following chain of flopping contractions
\[ \xymatrix@R-1pc@C-2pc{
& \Bl_{W_2}S^{[5]} 	   \ar[dr]\ar[dl] &&
\Bl_{\tilde{W_3}}X_1   \ar[dr]\ar[dl] &&
\Bl_{\tilde{Z_3}}X_3   \ar[dr]\ar[dl] &&
\Bl_{Z_1}M             \ar[dr]\ar[dl] \\
S^{[5]} \ar@{-->}[rr]^{g_1} && X_1 \ar@{-->}[rr]^{g_2} && X_2 && X_3 \ar@{-->}[ll]_{g_3} && M. \ar@{-->}[ll]_{g_4}} \]
Here, $\tilde{W_3}$ (resp.\ $\tilde{Z_3}$) is the strict transform of $W_3$ (resp.\ $Z_3$) under $g_1$ (resp.\ $g_4$).
\end{theo}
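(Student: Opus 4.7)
I would apply the Bayer--Macr\`i machinery [MMP, Thm 5.7 and \S 9] at each of the four interior walls of $\overline{\Mov}(M)$ tabulated in Section \ref{section num walls}, and then match the resulting exceptional loci with the varieties $W_2, W_3, Z_1, Z_3$ constructed in Section \ref{section BN}. For each wall $\mathcal{W}_i$ the strategy has three ingredients: verify that $\mathcal{W}_i$ is not totally semistable using [MMP, Thm 5.7(a'), (b')], identify either an effective decomposition $v = a_i + (v - a_i)$ or the governing spherical class, and then read off the exceptional locus as a $\Pbb^{r_i}$-bundle of extensions. The codimensions $r_i$ and the classes $a_i, a'_i$ are already in the table from Section \ref{section num walls}, so the remaining work is to match the base of each projective bundle with the base appearing in Proposition \ref{W is jumping locus} or Proposition \ref{Z1}.

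Working in $v'$-coordinates near $S^{[5]}$, at wall $1$ the class $a'_1 = (1,-1,2) = v(\Ocal_S(-1))$ is spherical, and the decomposition $v' = a'_1 + (0,1,-6)$ realises the exceptional component as extensions
\[ 0 \to \Ocal_S(-1) \to \Ical \to \Ecal \to 0, \qquad \Ecal \in M_H(0,1,-6), \]
which coincides with $W_2$ by Proposition \ref{W is jumping locus}(i). At wall $2$ the class $a'_2 = (1,-1,1)$ is isotropic with $M_H(a'_2) \cong S$ via $p \mapsto \Ical_p(-H)$, and the decomposition $v' = (1,-1,1) + (0,1,-5)$ gives a $\Pbb^2$-bundle parameterising extensions $\Ical_p(-H) \to \Ical \to \Ecal$ with $\Ecal \in M_H(0,1,-5)$; this is precisely the open stratum $W_3 \setminus W_2$ of Proposition \ref{W is jumping locus}(ii), and its strict transform under $g_1$ is $\tilde{W}_3$. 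Symmetrically, in $v$-coordinates near $M$, at wall $4$ the class $a_4 = (-1,1,-2)$ is spherical and $v - a_4 = (1,1,1)$ is isotropic with $M_\sigma(1,1,1) \cong S$, so [MMP, \S 9] produces a $\Pbb^4$-bundle over $S$ which matches $Z_1$ by Proposition \ref{Z1}(i). At wall $3$ the class $a_3 = (1,0,1) = v(\Ocal_S)$ is spherical and $v - a_3 = (-1,2,-2)$ has square $4$; a short derived-equivalence argument identifies $M_\sigma(-1,2,-2)$ with a birational model of $S^{[3]}$, producing the $\Pbb^2$-bundle matching $Z_3$ from Proposition \ref{Z1}(ii), with strict transform $\tilde{Z}_3$ under $g_4$.

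To close, I would verify that moving across $\overline{\Mov}(M)$ from the Hilbert--Chow edge toward the Lagrangian-fibration edge one encounters the walls in the order $\mathcal{W}_1, \mathcal{W}_2, \mathcal{W}_3, \mathcal{W}_4$, which follows from pairing the divisor classes $D_i$ with a chosen basis of the two extremal nef cones via the Mukai morphism $\lambda$. I expect the main obstacles to be twofold. First, the systematic check that none of the four walls is totally semistable reduces to a lattice-theoretic verification inside each $\mathcal{H}_i$, ruling out isotropic $w \in \mathcal{H}_i$ with $(v,w) = 1$ and effective spherical $s \in \mathcal{H}_i$ with $(s,v) < 0$; enumerating these inside a rank-$2$ indefinite lattice is routine but has to be done for each $i$. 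Second, and more delicate, is the identification at wall $3$ of the Bayer--Macr\`i base $M_\sigma(-1,2,-2)$ with a concrete birational model of $S^{[3]}$ and the matching of its open locus with the open stratum of $Z_3$ from Proposition \ref{Z1}(ii); this requires tracing an explicit chain of spherical twists and tensor-by-$\Ocal_S(H)$ equivalences to align the two projective-bundle structures, rather than merely asserting deformation equivalence.
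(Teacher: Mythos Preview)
Your proposal is correct and follows essentially the same route as the paper's proof: apply the Bayer--Macr\`i classification at each of the four walls, check non-total-semistability, and match the resulting extension-bundles with $W_2$, $W_3$, $Z_1$, $Z_3$ via Propositions~\ref{W is jumping locus} and~\ref{Z1}. The paper implements the wall-ordering and the moduli identifications slightly more concretely than you sketch---it runs two explicit one-parameter paths $\sigma'_t=\sigma_{tH,-2H}$ (from the $S^{[5]}$ side) and $\sigma_t=\sigma_{tH,0}$ (from the $M$ side) to pin down the order in which the walls are crossed, and at wall~3 it gives the identification $M_{\sigma_t}(-1,2,-2)\cong S^{[3]}$ directly via $\Ical_\xi\mapsto R\sheafHom(\Ical_\xi,\Ocal_S)(-2)[1]$, using that $S^{[3]}$ has no walls, so no ``birational model'' ambiguity arises---but these are exactly the points you flagged as needing care, and the underlying strategy is the same.
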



\begin{rem}\label{primitive case}
In the primitive case, the birational transformation $S^{[g]} \dashrightarrow M_H(0,1,1)$, where $H^2 = 2g-2$ can be resolved in one step as follows (e.g.\ \cite[\S3]{ADM})
\begin{equation*}
\xymatrix{
&  \Hilb^g(\Ccal/|H|) \ar[dl] \ar[dr] & && (\xi, D) \ar@{|->}[dl] \ar@{|->}[dr]  &\\
S^{[g]} \ar@{-->}[rr]& & M_H(0,1,1),& \xi && \Ocal_D(\xi),
}
\end{equation*}
where
\[ \Hilb^g(\Ccal/|H|) = \{(\xi,D)\mid \xi \subset D \} \subset S^{[g]} \times |H|.\]
If $g=2$, the birational map $S^{[2]}\dashrightarrow M_H(0,1,1)$ is the original example of a Mukai flop \cite[Expl 0.6]{Mu}.\\
More generally, the geometry of the birational map $S^{[g]} \dashrightarrow M_H(0,1,1)$, where $H^2 = 2g-2$ is studied in \cite{Mar}.
\end{rem}

\begin{proof}[Proof of Theorem \ref{birational models}]
We verify wall by wall that the contracted locus is as claimed in the Proposition. This means first of all, that we have to find decompositions $v = a + b$ inside $\mathcal{H}_i = \langle v,a_i \rangle$ for $i=1,\ldots,4$ corresponding to a flopping contraction. If we set $a = xa_i + yv$, solving for a positive decomposition reduces to solve
\[ \left\lbrace {\begin{array}{cc}
& a_i^2x^2 + 2(a_i,v)xy + 8y^2 \geq 0 \\
& 0 < (a_i,v)x+8y \leq 4
\end{array}} \right. \]
for integer solutions $(x,y)$ and similar for decompositions, where $a$ is a spherical class. Due to the small dimension of our example, we find by explicit computation that at every wall $v = a_i + (v-a_i)$ is the only suitable decomposition. We also compute that none of the walls is totally semistable with respect to $v$. Moreover, for all $i=1,\ldots,4$, the parallelogram inside $\Hcal_i \otimes \R$ with vertices $0,a_i,v-a_i,v$ does not contain any other lattice points and so the decomposition does not admit a refinement. This is reflected by the fact that the exceptional locus in each step is irreducible and actually a projective bundle.\\

We attack the first two walls starting from $S^{[5]}$. The wall crossing can be realized by the following  path in the stability manifold
\[ \sigma'_t \coloneqq \sigma_{tH,-2H} = (Z_{tH,-2H},\Coh^{-2}(S)),\ t\in (0,+\infty),\]
that actually hits every wall for $v'$ \cite[Thm 10.8]{BM}. (For the definitions, see \cite[Prop 7.1]{Bri} or the summary in \cite[\S 6]{BM}). Explicitly, the walls arise when $Z_{tH,-2H}(v)$ and $Z_{tH,-2H}(a_i)$ are $\R$-linearly dependent. We find
\[ +\infty > t_0 \coloneqq 2 > t_1 \coloneqq \sqrt{2} > t_2 \coloneqq \sqrt{\tfrac{5}{3}} > t_3 \coloneqq \sqrt{\tfrac{2}{3}} > 0\]
such that
\[ M_{\sigma'_t}(v') \cong \left\lbrace \begin{array}{ll}
S^{[5]} &\text{for } t> t_0\\
X_i     &\text{for } t_{i-1} > t > t_{i}\\
M     &\text{for } t_3 > t > 0.
\end{array}\right.\]
At the first wall, we have the sublattice $\Hcal_1= \langle a'_1,v'\rangle \cong (\Z^2,\left(\begin{smallmatrix}-2 & 2 \\ 2& 8\end{smallmatrix}\right))$. This lattice 
admits no decomposition of $v'$ into positive classes. But we have
\[ v'= a'_1 + b'_1\ \text {with}\ a'_1=(1,-1,2)= v(\Ocal_S(-1))\ \text{and}\ b'_1=(0,1,-6)\]
and $a'_1$ is the only spherical class $s$ with $0 < (s,v) \leq 4$ in $\Hcal_1$. Hence an ideal sheaf $\Ical_\xi \in S^{[5]}$ is in the exceptional locus of $g_1\colon S^{[5]} \dashrightarrow X_1$ if and only if it fits into a short exact sequence
\[ 0 \rightarrow \Ocal_S(-1) \rightarrow \Ical_\xi \rightarrow \Qcal \rightarrow 0\]
with $\Qcal \in M_H(0,1,-6)$. By Proposition \ref{W is jumping locus}, this is equivalent to $\xi \in W_2$. Hence $g_1$ is the flop at the projective bundle $W_2$ (see also \cite[Expl 10.2]{BM}).

The second wall corresponds to the lattice $\Hcal_2= \langle a'_2,v'\rangle \cong (\Z^2,\left(\begin{smallmatrix}0 & 3 \\ 3& 8\end{smallmatrix}\right))$, which contains no suitable spherical classes and admits exactly one decomposition into positive classes. Namely,
\[ v'=a'_2 +b'_2\ \text {with}\ a'_2 = (1,-1,1)\ \text{and}\ b'_2 = (0,1,-5).\]
Choose $t_1 < r < t_0$ such that $X_1= M_{\sigma'_r}(v')$. Let $\xi \in S^{[5]}\setminus W_2$. Then $\Ical_\xi$ is not destabilized at the first wall and hence $\Ical_\xi \in X_1$. Now, $\Ical_\xi$ is in the exceptional locus of $g_2 \colon X_1 \dashrightarrow X_2$ if and only if there is an exact triangle
\[ \Acal \rightarrow \Ical_\xi \rightarrow \Bcal \xrightarrow{[1]},\]
where $\Acal \in M_{\sigma'_r}(a_2')$ and $\Bcal \in M_{\sigma'_r}(b_2')$ are stable objects. We claim that 
 $M_{\sigma'_r}(a'_2)$ is isomorphic to the original K3 surface $S$, via $S \ni x \mapsto \Ical_x(-1)$, where $\Ical_x \subset \Ocal_S$ is the ideal sheaf of the point $x \in S$. Indeed, $\Ical_x(-1) \in \Coh^{-2}(S)$ for all $x \in S$. Moreover, $M_{\sigma'_r}(b_2')\cong M_H(0,1,-5)$, as there is only one wall for $b_2'$, which is defined by $v(\Ocal_S(-1))=(1,-1,2)$ and hit by our path for $t=\sqrt{2}= t_1$ (cf.\ Remark \ref{primitive case}). Consequently, $\Ical_\xi$ is contracted if and only if there is $x \in S, \Qcal \in M_H(0,1,-5)$ and an extension
\[ 0 \rightarrow \Ical_x(-1) \rightarrow \Ical_\xi \rightarrow \Qcal \rightarrow 0.\]
In other words, $\xi \in W_3$, see Proposition \ref{W is jumping locus}.

The remaining walls, we detect starting from $M$ along the path $\sigma_t \coloneqq \sigma_{tH,0}$ for $t \in (1,+\infty)$. Then
\[ M_{\sigma_t}(v) \cong \left\lbrace \begin{array}{ll}
M     &\text{for } t > \tfrac{\sqrt{6}}{2} < t\\
X_3 & \text{for } \tfrac{\sqrt{6}}{2} > t > 1,
\end{array}\right.\]
i.e.\ this path only hits the first wall for $t= \tfrac{\sqrt{6}}{2}$ but it serves our purpose, in the sense that it provides us with a classical moduli description of the exceptional loci.

The contraction $g_4\colon  M \dashrightarrow X_3$ arises from the decomposition
\[ v= a_4 + b_4\ \text {with}\ a_4 = (-1,1,-2) = - v(\Ocal_S(-1))\ \text{and}\ b_4 = (1,1,1),\]
which is the only suitable decomposition in $\Hcal_4 =\langle a_4,v\rangle \cong (\Z^2,\left(\begin{smallmatrix}-2 & 3 \\ 3 & 8\end{smallmatrix}\right))$. Let $t> \tfrac{\sqrt{6}}{2}$. We note that $M_{\sigma_t}(a_4)$ consists of the point $\Ocal_S(-1)[1]$ and $S \cong M_{\sigma_t}(b_4)$ via $x \mapsto \Ical_x(1)$. Moreover, $\phi_t(a_4) > \phi_t(b_4)$. Hence the exceptional locus of $g_4$ consists of those sheaves $\Ecal \in M$ that arise as quotients
\[ 0 \rightarrow \Ocal_S(-1) \rightarrow \Ical_x(1) \rightarrow \Ecal \rightarrow 0 \]
for some $x \in S$. This is the projective bundle $Z_1$, as defined in Proposition \ref{Z1}.

Finally, there is the wall defined by $\Hcal_3 =\langle a_3,v\rangle \cong (\Z^2,\left(\begin{smallmatrix}-2 & 1 \\ 1 & 8\end{smallmatrix}\right))$, which only admits the decomposition
\[ v= a_3 + b_3\ \text {with}\ a_3 = (1,0,1)\ \text{and}\ b_3 = (-1,2,-2).\]
Let $\Ecal \in M\setminus Z_1$. Then $\Ecal$ is not destabilized at the first wall and thus $\Ecal \in X_3$. Now, $\Ecal$ is in the exceptional locus of $g_3 \colon X_3 \dashrightarrow X_2$ if and only if there is an exact triangle
\[ \Acal \rightarrow \Ecal \rightarrow \Bcal \xrightarrow{[1]},\]
where $\Acal \in M_{\sigma_t}(a_3)$ and $\Bcal \in M_{\sigma_t}(b_3)$ are stable objects and $1 < t < \tfrac{\sqrt{6}}{2}$. The space $M_{\sigma_t}(a_3)$ consists of the point $\Ocal_S$ and $S^{[3]} \cong M_{\sigma_t}(b_3)$, via $\Ical_{\xi} \mapsto R\sheafHom(\Ical_\xi,\Ocal_S)(-2)[1]$. Indeed, there are no walls for $S^{[3]}$ \cite[Prop 5.6]{Al} and $R\sheafHom(\Ical_\xi,\Ocal_S)(-2) \in \Coh^0(S)$. Consequently, $\Ecal$ is contracted if and only if there is $\xi \in S^{[3]}$ and an exact triangle
\[ \Ocal_S \rightarrow \Ecal \rightarrow R\sheafHom(\Ical_\xi,\Ocal_S)(-2)[1] \xrightarrow{[1]} \]
or equivalently an exact sequence
\[ 0 \rightarrow \Ocal_S(-2) \rightarrow \Ical_\xi \rightarrow \sheafExt^1(\Ecal(2),\Ocal_S) \rightarrow 0. \]
Here, $\Ecal' \coloneqq \sheafExt^1(\Ecal(2),\Ocal_S) \in M_H(0,2,-7)$. By the proof of Proposition \ref{Z1} we conclude that the exceptional locus of $g_3$ is $\tilde{Z}_3$.
\end{proof}

\begin{rem}
For $v=(0,2,-1)$, the wall-crossing can not be realized entirely in the $(H,H)$-plane. Actually, only two walls intersect the $(H,H)$-plane.
\end{rem}

\begin{cor}\label{Z1 ohne Z3}
We have that $Z_3\setminus Z_1$ is isomorphic to a $\Pbb^2$-bundle over an open subset of $S^{[3]}$. Moreover,
\[ Z_3 \cap \BN^1(M) = Z_1.\]
\end{cor}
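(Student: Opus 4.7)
The plan is as follows. The first assertion is a direct consequence of Theorem~\ref{birational models}: the flop $g_4\colon M\dashrightarrow X_3$ restricts to an isomorphism between $M\setminus Z_1$ and the complement in $X_3$ of the flopped locus, and under this isomorphism $Z_3\setminus Z_1$ corresponds to an open subset of the strict transform $\tilde Z_3\subset X_3$. But $\tilde Z_3$ is precisely the exceptional locus of the flopping contraction $g_3$ attached to the decomposition $v=a_3+b_3$; the general theory of \cite{MMP} (as applied in the proof of Theorem~\ref{birational models}) realises $\tilde Z_3$ generically as a $\Pbb^{r_3}$-bundle with $r_3=(v-a_3,a_3)-1=2$ over $M_{\sigma_t}(b_3)\cong S^{[3]}$. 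Transporting this back via $g_4$ yields the claimed $\Pbb^2$-bundle structure on $Z_3\setminus Z_1$.

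For the second assertion, the inclusion $Z_1\subset Z_3\cap\BN^1(M)$ was already recorded just before Proposition~\ref{Z1}. I will show the reverse inclusion by proving $h^0(\Ecal)=1$ for every $\Ecal\in Z_3\setminus Z_1$. Such an $\Ecal$, viewed as a point of $\tilde Z_3\subset X_3$, fits in the defining triangle
\[ \Ocal_S\to\Ecal\to R\sheafHom(\Ical_\xi,\Ocal_S)(-2)[1]\xrightarrow{[1]} \]
for some $\xi\in S^{[3]}$. Applying $R\Hom(\Ocal_S,-)$ and using $\hom(\Ocal_S,\Ocal_S)=1$, $\ext^1(\Ocal_S,\Ocal_S)=0$ on the K3 surface gives
\[ h^0(\Ecal)\;=\;1+\ext^1_S(\Ical_\xi,\Ocal_S(-2)). \]
Serre duality on $S$ identifies $\ext^1_S(\Ical_\xi,\Ocal_S(-2))$ with $h^1(S,\Ical_\xi(2))$, and the restriction sequence $0\to\Ical_\xi(2)\to\Ocal_S(2)\to\Ocal_\xi\to 0$ together with $H^1(\Ocal_S(2))=0$ yields $h^1(\Ical_\xi(2))=h^0(\Ical_\xi(2))-3$. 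Therefore $h^0(\Ecal)=h^0(\Ical_\xi(2))-2$.

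A parallel Ext-computation, based on the triangle $\Ocal_S\to R\sheafHom(\Ical_\xi,\Ocal_S)\to\Ocal_\xi[-1]$, identifies the fiber of $\tilde Z_3\to S^{[3]}$ over $\xi$ with $\Pbb(\Ext^1(b_3,a_3))\cong\Pbb(H^0(\Ical_\xi(2)))$. The $\Pbb^2$-bundle condition from the first part therefore forces $h^0(\Ical_\xi(2))=3$ for every $\xi$ parameterising $Z_3\setminus Z_1$, so the preceding formula gives $h^0(\Ecal)=1$ throughout $Z_3\setminus Z_1$. This shows $(Z_3\setminus Z_1)\cap\BN^1(M)=\emptyset$ and hence $Z_3\cap\BN^1(M)=Z_1$. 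The one delicate point is the cohomological identity expressing $h^0(\Ecal)$ in terms of $h^0(\Ical_\xi(2))$; everything else reduces to routine manipulation of Ext-long-exact-sequences and Serre duality on $S$.
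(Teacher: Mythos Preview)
Your argument for the first assertion invokes Theorem~\ref{birational models} just as the paper does, but you draw a weaker conclusion than the paper. From the general theory you only record that $\tilde Z_3$ is \emph{generically} a $\Pbb^2$-bundle over $S^{[3]}$; the step ``transporting this back via $g_4$'' does not by itself explain why $Z_3\setminus Z_1=\tilde Z_3\setminus Z_1^\vee$ is a $\Pbb^2$-bundle over an open subset of $S^{[3]}$, rather than something whose fibres still jump. The paper instead extracts from the same proof the sharper statement that the map $\varphi_3\colon\Xcal_3\dashrightarrow M_H(0,2,-7)\cong M$ of Proposition~\ref{Z1} restricts to an isomorphism from its defined-and-injective locus onto $Z_3\setminus Z_1$.

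This gap propagates into your second assertion, where your route genuinely differs from the paper's. Your identity $h^0(\Ecal)=h^0(\Ical_\xi(2))-2$ is correct, as is the identification $\Ext^1(\Bcal_\xi,\Ocal_S)\cong H^0(\Ical_\xi(2))$. But when you conclude ``the $\Pbb^2$-bundle condition forces $h^0(\Ical_\xi(2))=3$'' you are tacitly assuming that, for every $\xi$ in the base, the \emph{entire} fibre $\Pbb(H^0(\Ical_\xi(2)))$ of $\tilde Z_3\to S^{[3]}$ lies inside $Z_3\setminus Z_1$; knowing only that $Z_3\setminus Z_1$ is some $\Pbb^2$-bundle does not exclude that it sits as a hyperplane in a larger fibre. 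Concretely, if $\xi\in S^{[3]}$ contains an $\iota$-conjugate pair then $h^0(\Ical_\xi(2))=4$, and one must check separately that every resulting extension already lands in the flopped locus $Z_1^\vee\subset X_3$. The paper sidesteps this: once $Z_3\setminus Z_1$ is characterised as the injectivity locus of $\varphi_3$, one simply observes (over a smooth $D$) that $h^0(\Lcal)\ge2$ makes the Abel--Jacobi fibre $\{\xi\subset D:\Ocal_D(\xi)\cong\Lcal\}$ positive-dimensional, so $\varphi_3$ is not injective there and hence $\Lcal\in Z_1$. Your cohomological formula would give an attractive uniform argument (not restricted to smooth fibres) once the first part is established in the precise form the paper uses.
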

\begin{proof}
The proof of Theorem \ref{birational models} implies that the map $\varphi_3 \colon \mathcal{X}_3 \dashrightarrow M_H(0,2,-7) \cong M$ from Proposition \ref{Z1} identifies the open subset of $\mathcal{X}_3$, where $\varphi_3$ is defined and injective with $Z_3\setminus Z_1$. Let $D \in |2H|$ be a smooth curve and $\Lcal \in \Pic^3(D)$ such that $h^0(\Lcal) \geq 2$. Then $\varphi_3$ is defined but not injective in $\Lcal^\vee$. Consequently, we must have $\Lcal \in Z_1$.
\end{proof}

We could have also determined $(g_3)^{-1} \colon X_2 \dashrightarrow X_3$.

\begin{prop}
We have
\[ \Bl_{\tilde{P_3}}X_2 \cong \Bl_{\tilde{Z_3}}X_3,\]
i.e.\ $\tilde{P}_3 \subset X_2$ is the dual projective bundle of $\tilde{Z}_3 \subset X_3$.
\end{prop}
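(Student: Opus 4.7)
The plan is to apply the Bayer--Macr\`i wall-crossing analysis at $\Hcal_3 = \langle v, a_3 \rangle$ from the $X_2$-side, identify the resulting exceptional locus with $\tilde{P}_3$, and conclude using the standard description of a flopping wall whose two exceptional loci are dual projective bundles.

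First I would compute the $X_2$-side exceptional locus of $g_3$ directly, mirroring the proof of Theorem~\ref{birational models}: with the same decomposition $v = a_3 + b_3 = v(\Ocal_S) + v(\Ical_\eta^\vee(-2)[1])$ but with the opposite ordering of sub and quotient, a generic object $\Ecal' \in X_2$ in the locus fits into an exact triangle
\[ \Ical_\eta^\vee(-2)[1] \to \Ecal' \to \Ocal_S \xrightarrow{[1]} \]
with $\eta$ varying over an open subset of $S^{[3]}$. Using the truncation triangle $\Ocal_S \to \Ical_\eta^\vee \to \Ocal_\eta[-1]$ twisted by $\Ocal_S(-2)$, together with the surjectivity of the restriction $H^0(\Ocal_S(2)) \twoheadrightarrow H^0(\Ocal_\eta)$ for generic $\eta$, one computes
\[ \Ext^1(\Ocal_S, \Ical_\eta^\vee(-2)[1]) \cong H^2(\Ical_\eta^\vee(-2)) \cong H^0(\Ical_\eta(2))^\vee \]
of dimension $3$, canonically Serre dual to $\Ext^1(\Ical_\eta^\vee(-2)[1], \Ocal_S)$, whose projectivization is the fiber of $\tilde{Z}_3$ over $\eta$ by Proposition~\ref{Z1}. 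Hence the $X_2$-side exceptional locus is a $\Pbb^2$-bundle over an open subset of $S^{[3]}$, fiberwise Serre dual to $\tilde{Z}_3$.

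Next I would match this $\Pbb^2$-bundle with $\tilde{P}_3$. By Example~\ref{P}, $\dim P_3 = 8$, and $P_3 \subset S^{[5]}$ meets $W_2 \cup W_3$ properly (for generic $\eta \in S^{[3]}$ no curve of $|H|$ contains $\eta$, nor two of its points together with a point of $\pi^{-1}(x)$), so $\tilde{P}_3 \subset X_2$ is $8$-dimensional and irreducible, generically a $\Pbb^2$-bundle over $S^{[3]}$ via $(x, \eta) \mapsto \eta$. For generic $\xi_5 = \pi^{-1}(x) \cup \eta \in P_3$, the conics in $|2H|$ containing $\xi_5$ pull back from conics in $\Pbb^2$ through the four points $\{x\} \cup \pi(\eta)$, forming a pencil, so $h^0(\Ical_{\xi_5}(2)) = 2$ and $P_3 \subset \BN^1(S^{[5]})$; by Proposition~\ref{birational map M S5}, $\xi_5$ lies in the indeterminacy of $T \colon S^{[5]} \dashrightarrow M$, which forces its image in $X_2$ onto an exceptional locus of one of the remaining transformations $g_3^{-1}$ or $g_4^{-1}$. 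The $g_4^{-1}$-exceptional locus in $X_3$ is the dual projective bundle to $Z_1 \subset M$, a $6$-dimensional projective bundle over $S$; its image in $X_2$ under $g_3$ (where defined) is likewise $6$-dimensional, too small to contain $\tilde{P}_3$. Thus $\tilde{P}_3$ is contained in the $g_3^{-1}$-exceptional locus, and equality follows by dimension and irreducibility.

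Finally, with $\tilde{P}_3 \subset X_2$ and $\tilde{Z}_3 \subset X_3$ identified as dual $\Pbb^2$-bundles over the same open subset of $S^{[3]}$, the isomorphism $\Bl_{\tilde{P}_3} X_2 \cong \Bl_{\tilde{Z}_3} X_3$ becomes the standard common resolution of a flopping wall of this type: both blow-ups realize the fiberwise incidence variety $\{(p, H) : p \in H\} \subset \Pbb^2 \times (\Pbb^2)^\vee$ relative to $S^{[3]}$. The main obstacle in this plan is the explicit matching of objects: tracing an ideal sheaf $\Ical_{\xi_5}$ with $\xi_5 \in P_3$ through the birational maps $g_1, g_2$ and verifying that its avatar in $X_2$ admits the expected Jordan--H\"older filtration at $\Hcal_3$. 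The dimension-theoretic argument above sidesteps this tracing.
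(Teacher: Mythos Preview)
Your proposal is essentially correct, but it takes a genuinely different route from the paper's proof. The paper works entirely on the $S^{[5]}$-side (with $v'=(1,0,-4)$) and produces the destabilizing triangle at $\Hcal_3$ \emph{by hand}: for a generic $\xi_5=\pi^{-1}(x)\cup\xi'\in P_3$ with $\xi'\in S^{[3]}$, the fact that $\Ical_{\pi^{-1}(x)}(1)$ is globally generated with $h^0=2$ gives a Koszul-type resolution $0\to\Ocal_S(-2)\to\Ocal_S(-1)^{\oplus 2}\to\Ical_{\pi^{-1}(x)}\to 0$, and tensoring with $\Ical_{\xi'}$ followed by a pushout yields
\[
0\to\Ocal_S(-2)\to\Ocal_S(-1)\oplus\Ical_{\xi'}(-1)\to\Ical_{\xi_5}\to 0.
\]
Rotated to a triangle, this exhibits $\Ical_{\xi_5}$ as an extension of $\Ocal_S(-2)[1]\in M_{\sigma'_r}(-1,2,-5)$ by an object of Mukai vector $(2,-2,1)$, i.e.\ exactly the Harder--Narasimhan shape at $\Hcal_3$ on the $X_2$-side. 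Your argument instead computes the $X_2$-side exceptional locus abstractly as the Serre-dual $\Pbb^2$-bundle to $\tilde{Z}_3$ (which is immediate from the general wall-crossing picture), and then matches it with $\tilde P_3$ by elimination: $P_3\subset\BN^1(S^{[5]})$ forces destabilization somewhere, $P_3\not\subset W_2\cup W_3$ generically rules out $\Hcal_1,\Hcal_2$, and the $6$-dimensional $Z_1^\vee$ rules out $\Hcal_4$.

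What each approach buys: the paper's explicit sequence identifies the actual destabilizing sub-object (and in particular the point of $S^{[3]}$ over which the fiber sits is the residual $\xi'$, making the bundle structure transparent), at the cost of a concrete construction. Your argument is shorter and conceptually cleaner, but one step deserves a word of care: you invoke ``$\xi_5$ lies in the indeterminacy of $T$'' via Proposition~\ref{birational map M S5}, which literally only asserts that $T$ is an isomorphism on $S^{[5]}\setminus\BN^1(S^{[5]})$, not that this is the \emph{maximal} such open set. To close this, you should note that if the composition $g_4^{-1}g_3^{-1}g_2g_1$ were defined as a morphism at $\xi_5$, it would agree with $T$ there by density, and hence the spherical-twist description of $T$ in the paper's proof already shows $T_{\Ocal_S(-2)}(\Ical_{\xi_5})$ fails to be a sheaf when $h^0(\Ical_{\xi_5}(2))\ge 2$; alternatively, one can argue directly that the common stable locus along $\sigma'_t$ coincides with $S^{[5]}\setminus\BN^1(S^{[5]})$. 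With that said, your dimension-theoretic shortcut is a legitimate alternative to the paper's explicit tracing.
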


\begin{proof}
We let $t_1 > r > t_2$ and $X_2 = M_{\sigma'_r}(v')$. We want to see that $\tilde{P_3}$ is the projective bundle parameterizing extensions of the form
\[ \Acal \rightarrow \Ical \rightarrow \Bcal \xrightarrow{[1]},\]
where $\Acal \in M_{\sigma'_r}(2,-2,1)$ and $\Bcal \in M_{\sigma'_r}(-1,2,-5)$. Necessarily, $\Bcal = \Ocal_S(-2)[1]$ and
\[S^{[3]} \cong M_{\sigma'_r}(2,-2,1),\ \Ical_\xi \mapsto \Ical_\xi(-1)\oplus \Ocal_S(-1).\]
Indeed, we verify that $\Ecal[1] \coloneqq (\Ical_\xi(-1)\oplus \Ocal_S(-1))[1] \in \Coh^{-2}(S)$. Assume that $\Fcal \subset \Ecal$ is a subbundle. Then either $\rk\Fcal = 2$ and $c_1(\Fcal)= c_1(\Ecal) = -2H$. Or $\rk\Fcal = 1$ and $\Fcal$ embeds into $\Ical_\xi(-1)$ or into $\Ocal_S(-1)$, which implies $c_1(\Fcal).H \leq -2$. In both cases, we have $\tfrac{H.c_1(\Fcal)}{\rk\Fcal} + 2\leq 0$ and hence $\Ecal[1] \in \Coh^{-2}(S)$.\\
Let $\Ical_\xi$ be the ideal sheaf of a generic point in $P_3 \subset S^{[5]}$. Then $\xi = \zeta \cup \xi'$ for a subscheme $\zeta \in S^{[2]}$ such that $\Supp(\zeta) = \{x,\iota(x)\}$ and $\xi' \in S^{[3]}$ is disjoint from $\zeta$ (cf.\ Example \ref{P}). The assumption on $\zeta$ is equivalent to $\Ical_\zeta(1)$ being globally generated with $h^0(\Ical_\zeta(1))=2$. This gives a short exact sequence
\begin{equation}\label{seq I}
0 \rightarrow \Ocal_S(-2) \rightarrow \Ocal_S(-1)^{\oplus 2} \rightarrow \Ical_{\zeta} \rightarrow 0.
\end{equation}
Therefore, $\Ical_\xi = \Ical_\zeta \cdot \Ical_{\xi'}$ fits into a diagram
\[ \xymatrix{
0 \ar[r]& \Ical_{\xi'}(-2) \ar[d]\ar[r]& \Ical_{\xi'}(-1)^{\oplus 2} \ar[d] \ar[r]& \Ical_{\xi}\ar@{=}[d] \ar[r]& 0\\
0 \ar[r]& \Ocal_S(-2) \ar[r]& \Ocal_S(-1)\oplus \Ical_{\xi'}(-1) \ar[r]& \Ical_{\xi} \ar[r]& 0.
}\]
Here, the first line is \eqref{seq I} tensored with $\Ical_{\xi'}$ and the second line is the pushout along the left vertical arrow. This finishes the proof.
\end{proof}

Interestingly, the dual projective bundle of $\tilde{W}_3 \subset X_1$ does not yield a component of the Brill--Noether locus in $M$.

\begin{prop}
Let $\tilde{W}_3^\vee \subset X_2$ be the exceptional locus of $(g_2)^{-1}\colon X_2 \dashrightarrow X_1$ and $B_3 \subset M$ its strict transform then
\[ B_3 \subset M_\Sigma.\]
More precicesly, let $D=D_1 \cup D_2 \in \Sigma\setminus \Delta$. Then
\[ B_3  \cap f^{-1}(D_1 \cup D_2) = \{ \Ecal \in f^{-1}(D_1 \cup D_2)\mid h^0(\Ecal|_{D_i}) \neq 0\ \text{for both}\ i=1,2 \}.\]
In particular, $B_3$ is not contained in $\BN^0(M)$ and $T^{-1}$ is generically defined in $B_3$.
\end{prop}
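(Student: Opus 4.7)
The plan is to compute $B_3$ by applying the derived equivalence $T_{\Ocal_S(-2)}(-)\otimes \Ocal_S(2)$ to the generic point of $\tilde{W}_3^\vee \subset X_2$, which realises (on the level of stable objects) the wall-crossings from $X_2$ to $M$. By the analysis in the proof of Theorem \ref{birational models}, a generic $F \in \tilde{W}_3^\vee$ fits in a short exact sequence $0 \to \Qcal \to F \to \Ical_x(-1) \to 0$ in the appropriate heart, with $x \in S$ and $\Qcal \in M_H(0,1,-5)$ supported on a smooth $C_1 \in |H|$.

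First I would compute the spherical twists of the two HN-factors. For generic $\Qcal$, one has $R\Hom(\Ocal_S(-2),\Qcal) = H^1(\Qcal(2)) = \C$, since $\Qcal(2)$ is a degree zero line bundle on the genus two curve $C_1$; hence $T_{\Ocal_S(-2)}(\Qcal)$ is a sheaf fitting in a non-split extension $0 \to \Qcal \to T_{\Ocal_S(-2)}(\Qcal) \to \Ocal_S(-2) \to 0$. For $\Ical_x(-1)$, the two-dimensional space $R\Hom(\Ocal_S(-2),\Ical_x(-1)) = H^0(\Ical_x(1))$ is the pencil of curves in $|H|$ through $x$, with base locus $\{x,\iota(x)\}$; a Koszul-type calculation yields a two-term complex $T_{\Ocal_S(-2)}(\Ical_x(-1))$ with $H^{-1} = \Ocal_S(-3)$ and $H^0 = \Ocal_{\iota(x)}$. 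Applying the functor to the short exact sequence and tensoring by $\Ocal_S(2)$ produces a triangle whose cohomology long exact sequence describes $\Ecal := T_{\Ocal_S(-2)}(F)(2)$ through a connecting map $\delta\colon \Ocal_S(-1) \to T_{\Ocal_S(-2)}(\Qcal)(2)$. Composing $\delta$ with the quotient onto $\Ocal_S$ gives a section $s \in H^0(\Ocal_S(1))$, defining a curve $C_2 := V(s) \in |H|$. I will verify (directly, or by using the S-equivalence principle at the wall $\Hcal_2$ together with the defining condition $\xi \in W_3$) that the map $\Ext^1(\Ical_x(-1),\Qcal) \to H^0(\Ocal_S(1))$, $e \mapsto s$, factors through $H^0(\Ical_x(1))$; so $C_2$ belongs to the pencil through $x$ and in particular contains $\iota(x)$. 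For generic $e$, $\delta$ is then injective, $\Ecal$ is a coherent sheaf, and the Snake Lemma applied to the inclusion $s\colon \Ocal_S(-1)\hookrightarrow \Ocal_S$ inside $0 \to \Qcal(2) \to T_{\Ocal_S(-2)}(\Qcal)(2) \to \Ocal_S \to 0$ exhibits $\Ecal$ as a successive extension with subquotients $\Qcal(2)$, $\Ocal_{C_2}$ and $\Ocal_{\iota(x)}$. Thus $\Supp(\Ecal) = C_1 \cup C_2 \in \Sigma$, which proves $B_3 \subset M_\Sigma$.

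Finally, the above filtration identifies the restrictions of the generic line bundle $\Ecal$ on $D = C_1 \cup C_2$: via the standard normalisation sequence one finds $\Ecal|_{C_1} \cong \Qcal(2)(p_1+p_2)$, a degree two line bundle on $C_1$ (so $h^0 \geq 1$ is automatic by Riemann--Roch on the genus two curve), and $\Ecal|_{C_2} \cong \Ocal_{C_2}(\iota(x))$ (so $h^0 = 1$). This gives the inclusion $\subseteq$ in the claimed equality. Both sides are $8$-dimensional over $\Sigma$ (the imposed condition cuts out a codimension-one subset in each partial-degree component of the fibre, and both partial degrees $(2,1)$ and $(1,2)$ arise by swapping which component of $D$ is $\Supp\Qcal$), so irreducibility and dimension force equality. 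For the remaining claims: generically the sections of $\Ecal|_{C_1}$ and $\Ecal|_{C_2}$ do not agree at the two nodes $p_1,p_2$, whence $H^0(\Ecal) = 0$ by the normalisation sequence and $B_3 \not\subset \BN^0(M)$; and since the generic $\Ecal \in B_3$ lies in $M_\Sigma \setminus (M^0_\Delta \cup \BN^1(M))$, Proposition \ref{birational map M S5} guarantees that $T^{-1}$ is generically defined on $B_3$. The main obstacle is the derived-category bookkeeping---particularly the identification of the image of $e \mapsto s$ with $H^0(\Ical_x(1))$, which is the geometric reason $C_2$ is forced to contain $x$ and hence $\Supp(\Ecal) \in \Sigma$.
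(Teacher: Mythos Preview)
Your approach is essentially the paper's: compute $T_{\Ocal_S(-2)}$ on the two Harder--Narasimhan factors $\Qcal$ and $\Ical_x(-1)$, take the cohomology long exact sequence of the twisted triangle, read off that $\Ecal = T(\Fcal)(2)$ is supported on $C_1 \cup C_2 \in \Sigma$ with $\Ecal|_{C_2} \cong \Ocal_{C_2}(\iota(x))$ and $\Ecal|_{C_1}$ of degree two, and finish by a dimension count. The one place you diverge is what you flag as the ``main obstacle'', namely showing that $e \mapsto s$ factors through $H^0(\Ical_x(1))$ so that $x \in C_2$. The paper sidesteps this entirely: since $T(\Fcal)$ is generically a stable, hence pure, one-dimensional sheaf with support a curve in $|2H|$, and since $\Ocal_{\iota(x)}$ appears as a subquotient in the long exact sequence, one gets $\iota(x) \in C_1 \cup C_2$ for free; generically $x \notin C_1 = \Supp\Qcal$, hence $\iota(x) \notin C_1$ (every curve in $|H|$ is $\iota$-invariant), so $\iota(x) \in C_2$ and thus $x \in C_2$. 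Your normalisation-sequence argument for $h^0(\Ecal)=0$ is a valid alternative to the paper's, which instead notes that a nonzero section would determine $\Ecal$ uniquely as $\Ecal^\vee = \ker(\Ocal_D \to \Ocal_\xi)$, so that $h^0 \neq 0$ cuts out a proper subset.
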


\begin{proof}
As above, we let $X_2 = M_{\sigma'_r}(v')$ with $t_1 > r > t_2$. We know that $\tilde{W}_3^\vee$ parameterizes extensions
\[ \Acal \rightarrow \Fcal \rightarrow \Bcal \xrightarrow{[1]},\]
where $\Acal \in M_{\sigma'_r}(1,0,-5)$ and $\Bcal \in M_{\sigma'_r}(1,-1,1)$. If $\Fcal \in \tilde{W}_3^\vee$ is a generic point, then we saw in the proof of Theorem \ref{birational models}, that we can take $\Acal = \Qcal \in M_H(0,1,-5)$ and $\Bcal = \Ical_x(-1)$ for a point $x\in S$. This gives
\[  \Qcal \rightarrow \Fcal \rightarrow \Ical_x(-1) \xrightarrow{[1]}.\]
To find the strict transform in $M$, we have to apply the spherical twist $T=T_{\Ocal_S(-2)}$ and tensor with $\Ocal_S(2)$. We find an exact triangle in $D^b(S)$
\begin{equation}\label{triangle twists}
T(\Qcal) \rightarrow T(\Fcal) \rightarrow T(\Ical_x(-1)) \xrightarrow{[1]},
\end{equation}
where $T(\Qcal)$ is concentrated in degree zero and fits into the sequence
\begin{equation}\label{T(Q)}
0 \rightarrow \Qcal \rightarrow T(\Qcal) \rightarrow \Ocal_S(-2) \rightarrow 0.
\end{equation}
Moreover, we compute that 
\[\Ocal_S(-2)^{\oplus 2} \rightarrow \Ical_x(-1) \rightarrow T(\Ical_x(-1)) \xrightarrow{[1]} \]
and thus $\Hcal^{-1}(T(\Ical_x(-1)))= \Ocal_S(-3)$ and $\Hcal^{0}(T(\Ical_x(-1)))= \Ocal_{\iota(x)}$.
Then the long exact cohomology sequence of \eqref{triangle twists} reads
\begin{equation}\label{T(F)}
0 \rightarrow \Hcal^{-1}(T(\Fcal)) \rightarrow \Ocal_S(-3) \rightarrow T(\Qcal) \rightarrow \Hcal^0(T(\Fcal)) \rightarrow \Ocal_{\iota(x)} \rightarrow 0.
\end{equation}
Generically, $T(\Fcal)$ is a Gieseker stable sheaf and in this case $T(\Fcal)(2)$ is a generic point in $B_3$. Combining \eqref{T(Q)} and \eqref{T(F)}, we see that if $T(\Fcal)$ is a sheaf, then $\Supp(T(\Fcal))= D_1 \cup D_2$, where $D_1 = \Supp(\Qcal)$ and $D_2 \in |H|$ such that $x \in D_2$. Hence $B_3 \subset M_\Sigma$.

Now, if we assume that $x \notin D_1$, then $\Ecal \coloneqq T(\Fcal)(2)$ is an extension
\begin{equation}\label{ext E}
0 \rightarrow \Qcal\otimes \Ocal_S(2)|_{D_1} \rightarrow \Ecal \rightarrow \Ocal_{D_2}(\iota(x)) \rightarrow 0.
\end{equation}
As $\sheafTor_1^{\Ocal_{{D_1} \cup D_2}}(\Ocal_{D_1},\Ocal_{D_2}(\iota(x)))=0$, this implies $\Ecal|_{D_1} \in \Pic^2({D_1})$, which is trivially effective. Moreover, restricting \eqref{ext E} to $D_2$ gives
\[ 0 \rightarrow \sheafTor_1^{\Ocal_{{D_1} \cup D_2}}(\Ocal_{D_2},\Ocal_{D_2}(\iota(x))) \xrightarrow\sim \Ocal_{{D_1} \cap D_2} \xrightarrow  0 \Ecal|_{D_2} \xrightarrow \sim \Ocal_{D_2}(\iota(x)) \rightarrow 0.\]
 In particular, $\Ecal|_{D_2} $ is an effective line bundle of degree one. Conversely,
\[\dim \{ \Ecal \in f^{-1}(D_1 \cup D_2)\mid h^0(\Ecal|_{D_i}) \neq 0\ \text{for both}\ i=1,2 \} = 4.\]
Hence, by dimension reasons, $B_3 \cap f^{-1}(D_1 \cup D_2)$ is as claimed for every $D=D_1 \cup D_2 \in \Sigma \setminus \Delta$.

It is left to show, that for a generic sheaf $\Ecal \in B_3$, we have $h^0(\Ecal)= 0$. To see this, we can assume $\Ecal \in \Pic^{(1,2)}(D_1\cup D_2)$ and that $h^0(\Ecal|_{D_i}) = 1$ for $i=1,2$. If $h^0(\Ecal)\neq 0$, necessarily $h^0(\Ecal)=1$ and the restriction to each component induces an isomorphism on global sections. However, this determines $\Ecal$ completely. Indeed, we have $\Ecal|_{D_1} = \Ocal_{D_1}(x)$ and $\Ecal|_{D_2} = \Ocal_{D_2}(y+z)$, for unique points $x,y$ and $z$. Then any non-zero section $\Ocal_{D_1 \cup D_2} \rightarrow \Ecal$ is necessarily injective with cokernel supported on $\xi = \{x,y,z\}$. In other words, $\Ecal$ has a section if and only if $\Ecal^\vee = \ker(\Ocal_D \rightarrow \Ocal_\xi)$. Finally, we saw in Proposition \ref{birational map M S5}, that $T^{-1}$ is defined for all $\Ecal \in M_{\Sigma\setminus\Delta}$ such that $h^0(\Ecal)=0$.
\end{proof}

\end{document}